\newtheorem{theorem}{Theorem}[section]
\newtheorem{proposition}[theorem]{Proposition}
\newtheorem{lemma}[theorem]{Lemma}
\newtheorem{corollary}[theorem]{Corollary}
\theoremstyle{definition}
\theoremstyle{remark}
\newtheorem*{remark}{Remark}
\newtheorem*{remarks}{Remarks}
\DeclareMathOperator{\rad}{rad} 
\newcommand\norm[1]{\left\lVert#1\right\rVert}
\newcommand{\identity}{\mathbf{1}}
\DeclareMathOperator{\orb}{orb}
\DeclarePairedDelimiter\abs{\lvert}{\rvert}
\newcommand{\field}{\mathbb{C}}
\titleformat{\section}{\normalfont\scshape}{\thesection}{1em}{}
\title{Commutativity and Orthogonality of Similarity Orbits in Banach Algebras}
\date{\today}
\author{Muhammad Hassen}
\address{University of Johannesburg, Johannesburg, South Africa}
\email{mhassen@uj.ac.za}
\author{Rudi Brits}
\address{University of Johannesburg, Johannesburg, South Africa}
\email{rbrits@uj.ac.za}
\author{Francois Schulz}
\address{University of Johannesburg, Johannesburg, South Africa}
\email{francoiss@uj.ac.za}
\subjclass[2020]{46H05, 46H15, 47A10}
	\keywords{Banach algebra, spectrum, spectral radius, similarity orbit, idempotent, algebraic element}
\begin{document}
	
	\begin{abstract}
		For a semisimple unital Banach algebra $ A $ over $ \field $, and elements $a,b\in A,$ we show that the similarity orbits, $ \orb(a)$ and $ \orb(b), $ over the principal component of the invertible group of $ A $ commute precisely when there is at least one nonzero complex number not belonging to the spectrum of any product $ a^\prime b^\prime $—where $ (a^\prime,b^\prime)\in\orb(a)\times\orb(b) $. In this case, the polynomially convex hull of the spectra of the $ a^\prime b^\prime $ is constant. When $ \orb(a)=\orb(b) ,$ then $ a $ is central under the aforementioned assumption—and the result then generalizes part of an old theorem due to J. Zemánek. We show further that the two classical characterizations of commutative Banach algebras via the spectral radius can be algebraically localized in the sense of `local' implies `global'. Thereafter, in Section~\ref{sec3}, we give a (somewhat weaker) localization of the above situation involving spectral perturbation on small neighborhoods in a similarity orbit. Finally, we apply the above results to algebraic elements and idempotents in particular, so that orthogonality of similarity orbits of two idempotents is equivalent to a pair of spectral radius properties. To conclude with, a couple of localization theorems specific to idempotents and algebraic elements are presented. Similar statements to all of the above hold if $ a^\prime b^\prime $ is replaced by $ a^\prime + b^\prime $, $ a^\prime - b^\prime $, or $  a^\prime + b^\prime-a^\prime b^\prime $. 
	\end{abstract}
\maketitle
\section{Introduction}
We will assume throughout this paper that $(A,\norm{\cdot})$ is a semisimple complex and unital Banach algebra with unit denoted by $ \identity $. Furthermore, the group of invertible elements in $ A $ will be denoted by $ G(A) $, with the principal component containing $ \identity $ denoted by $ G_\identity(A) $ (or simply $ G_\identity $ if there is no confusion). We shall use $ \sigma_A $ to denote the spectrum
$$\sigma_A(x)=\{\lambda\in\field\colon\lambda\identity-x\not\in G(A)\},$$
and $ \rho_A $ to denote the spectral radius
$$\rho_A(x)=\sup\{\abs{\lambda}\colon \lambda\in\sigma_A(x)\},$$
of an element $ x\in A $ (and obviously drop the subscript if the context is clear as to which algebra is concerned). The nonzero spectrum of $ x\in A $ will be the set $ \sigma^\prime(x)=\sigma
(x)\setminus\{0\} $, and the polynomially convex hull of the spectrum of $ x $ will be denoted using $ \hat{\sigma}(x) $. Geometrically, $ \hat{\sigma}(x) $ is the spectrum of $ x $ together with all of its holes filled in—which ensures that $ \field\setminus \hat{\sigma}(x) $ is connected. Throughout this paper, we will use $ a\circ b $ to denote the element $ a+b-ab $, for any given $ a,b\in A $. Notice then that $ a\circ b= \identity-(\identity-a)(\identity-b) $. The key ideas in our study will utilise and be concerned with the similarity orbit of an element $ a\in A $, defined as:
$$\orb(a)\coloneqq\left\{waw^{-1}\colon w\in G_\identity\right\}.$$	
Notice that, since $ G_\identity $ is a group, the collection of all similarity orbits of elements of $ A $ forms a partition of $ A $. Due to the nature of the elements of the similarity orbit, Jacobson's Lemma is favourable in the matter of reducing the work required to prove certain facts in our upcoming results. As a reminder, Jacobson's Lemma says that the following is true for all $ a,b\in A $:
$$\sigma^\prime(ab)=\sigma^\prime(ba).$$
Because $ G(A) $ is a group, one can quickly show (without using the above equality) that $ \sigma(a)=\sigma(waw^{-1}) $ for all $ w\in G_\identity(A) $. 

If $ E $ is the set of idempotents in a Banach algebra $ A $ and $ a\in E $, then J. Zemánek showed, amongst a large number of other results, that the connected component of $ E $ containing $ a $ is precisely the similarity orbit of $ a $ ({\cite[Theorem 3.3]{z79}}). Likewise, in 2016, E. Makai together with J. Zemánek in a collaborative effort had proven that if a polynomial $ p $ whose roots all have multiplicity 1 is fixed, then the connected components of the set $E(p)= \{x\in A\colon p(x)=0\} $ are precisely the similarity orbits of each of its members ({\cite[Theorem 3]{makaiZemanek16}}). The connectedness of the similarity orbits in $ E(p) $ is deeply investigated in this latter paper, particularly connectedness via polynomial and polygonal paths. Another important fact that will be used in a few instances is that $a$  \text{ is central }  if and only if   $\orb(a)=\{a\}$. Hence, if $ a^\prime\in\orb(a) $ is central, then $ a=a^\prime $ since $ \orb(a)=\orb(a^\prime)=\{a^\prime\}$.

The last major result in {\cite{z79}} provides ten statements which are equivalent to an idempotent $ a $ being central. In fact, these conditions are all intricately related to the similarity orbit and the spectrum of $ a $. To illustrate the various ideas that will henceforth be discussed, it is worthwhile to list some of the conditions given in {\cite[Theorem 4.3]{z79}}. If $ a\in E $, then the following are equivalent:
\begin{enumerate}
	\item[(i)] $ a $ is central;
	\item[(ii)] $ aE\subseteq E $;
	\item[(iii)] $ a\orb(a) \subseteq \orb(a) $;
	\item[(iv)] $ \displaystyle\sup_{r\in \orb(a)} \rho(ar)<\infty$;
	\item[(v)] $ \displaystyle\sup_{r\in \orb(a)} \rho(r+a)<\infty$;
	\item[(vi)] $ \displaystyle\sup_{r\in \orb(a)} \rho(r-a)<\infty$;
	\item[(vii)] $ \displaystyle\sup_{r\in \orb(a)} \rho(a\circ r)<\infty$;
	\item[(viii)] $ a\circ E\subseteq E $;
	\item[(ix)] $ a\circ \orb(a)\subseteq \orb(a) $.
	
\end{enumerate}


In the present paper, we not only show this list can be extended to members of $ E(p) $, but also a portion thereof is true for arbitrary Banach algebra elements. Furthermore, we had sought to obtain new information concerning the interactions between the elements belonging to different similarity orbits, say $\orb(a)$ and $\orb(b)$, in $A$. This is in stark contrast to the results in {\cite{z79}} where members of $ \orb(a) $ and only their interactions with $ a $ were studied. To clarify, compare the next question with (iv) in the above list: Suppose there is some $ \epsilon>0 $ such that $ \rho(ar)\leq \epsilon $ for all $ r\in \orb(b) $; what is the connection between $ \orb(a) $ and $ \orb(b) $? We answer this question via Corollary \ref{artificialList}, which says that, under the above premise, the similarity orbits must commute (with the converse also being true). Keep in mind that there need not be anything extra assumed for the elements; they need not necessarily be idempotents or even algebraic. Moreover, we also consider other algebraic variations, namely, $ r+a $, $ r-a $, and $ a\circ r $, as $ r $ varies through $ \orb(b) $—and the same conclusions relating $ \orb(a) $ and $ \orb(b) $ are deduced.

The improvements to the results due to Zemánek which we have in our current paper are corollaries to a more general theorem—namely—Theorem \ref{Musing}. In order to guarantee the commutation of the similarity orbits of $ a $ and $ b $, we need only require that the union of all $ \sigma^\prime(ar) $—as $ r $ runs through $ \orb(b) $—misses at least one nonzero complex number. We obtain the same statement for $ r+a $ and $ r-a $ and an ever-so-slightly weaker statement for $ a\circ r $. The details are thoroughly discussed in the theorem together with its proof. This is the highlighting result of Section \ref{sec1}. Rounding off this section is Theorem \ref{classical}, wherein we obtain an improvement upon two classical characterizations of commutative semisimple Banach algebras related to subadditivity and submultiplicativity (respectively) of the spectral radius. The precise details of this will be expounded upon thoroughly prior to Theorem \ref{classical}.

In Section \ref{sec3}, a weak localization of Theorem \ref{Musing} is proven. Instead of $ r $ varying throughout $ \orb(b) $, we consider when $ r $ varies in some open subset $ N $ of $ \orb(b) $—but—we must add a few additional conditions in the premise. The first being that $ \sigma(ar)\subseteq C $, for all $r\in N$ where $ C $ is a fixed countable subset of $ \field $; the second assumption is that $ ar $ has a finite spectrum for some $ r\in\orb(b) $. Then we can conclude that the similarity orbits of $ a $ and $ b $ commute. 
 These additional assumptions are actually quite natural when one considers algebraic elements, as we do in the next section—wherein we apply the theory resulting from the prior sections in Section \ref{sec4} to algebraic elements, and then to idempotents, so that new facts concerning the similarity orbits of these types of elements are brought to light.
 
In this final section, supposing that $ a $ and $ b $ belong to $E(p) $, we expose some conditions under which $ a^\prime b^\prime=\mu\identity $, for all $ (a^\prime,b^\prime)\in\orb(a)\times\orb(b) $—and $ \mu\in\field $ is fixed. Then, appropriately, if $ a,b\in E$, it turns out that $ \mu=0 $, so that the similarity orbits of $ a $ and $ b $ are orthogonal. The pair of concluding results in this section discusses some localization aspects regarding idempotents and algebraic elements in a similar vein to the main theorem of Section \ref{sec3}.

Much of the early work in this paper deals with representation theory for Banach algebras. For a brief idea of a typical argument using this, one can look at the proof of {\cite[Theorem 2.3]{radicalpaper}} wherein a characterization of the Jacobson radical, $ \rad(A) $, is given. A Banach algebra $ A $ is said to be semisimple when $ \rad(A)=\{0\} $. We shall use the fact that $ \rad(A) $ is precisely those members of $ A $ which belong to the kernels of all continuous irreducible representations $ \pi $ of $ A $ throughout our work (see {\cite[Theorem 4.2.1]{a91}}). Along with that, the next result due to A. Sinclair will be crucial to our proofs:
	\begin{theorem}[{Sinclair Density Theorem, \cite[Corollary 4.2.6]{a91}}]\label{SDT}
		Let $ \pi $ be a continuous irreducible representation of a Banach algebra $ A $ on a Banach space $ X $. If $ \left\{\xi_1,\ldots,\xi_n\right\} $ and $ \left\{\eta_1,\ldots,\eta_n\right\} $ are linearly independent subsets of $ X $, then there exists $ y\in G_\identity (A)$ such that 
		$$\pi(y)\xi_i=\eta_i \text{ for each } i\in\{1,\ldots,n\}.$$
	\end{theorem}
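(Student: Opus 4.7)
The plan is to reduce the problem to finite-dimensional linear algebra together with two classical tools: the (algebraic) Jacobson density theorem, and the fact that the exponential map sends $A$ into $G_\identity(A)$ via $\exp(b)=\sum_{k\geq 0}b^k/k!$. In broad strokes: build an automorphism of a suitable finite-dimensional subspace sending $\xi_i$ to $\eta_i$, take a matrix logarithm of it, lift the logarithm into $A$ via density, and then exponentiate inside $A$ to land in the principal component.

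First, set $V=\spann\{\xi_1,\ldots,\xi_n,\eta_1,\ldots,\eta_n\}$, a finite-dimensional subspace of $X$. Because both the $\xi_i$ and the $\eta_i$ are linearly independent in $V$, we may extend each family to a basis of $V$ and then define $T\in \text{End}(V)$ by sending the basis extending $\{\xi_i\}$ onto the basis extending $\{\eta_i\}$ bijectively, so that $T\xi_i=\eta_i$ for every $i$ and $T$ carries a basis of $V$ to a basis of $V$. Thus $T$ is invertible on $V$. Since $V$ is a complex vector space, every element of $\mathrm{GL}(V)$ admits a logarithm (via Jordan form: each Jordan block has nonzero eigenvalue, so a branch of $\log$ applies), hence we may write $T=\exp(S)$ for some $S\in \text{End}(V)$.

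Next, fix a basis $\{v_1,\ldots,v_m\}$ of $V$. By Schur's lemma for continuous irreducible representations on complex Banach spaces, the commutant of $\pi(A)$ is $\field\cdot \mathrm{id}_X$, so the hypotheses of Jacobson's density theorem apply to the ordinary linearly independent set $\{v_1,\ldots,v_m\}$. This yields $b\in A$ with $\pi(b)v_j=Sv_j$ for all $j$; by linearity $\pi(b)|_V=S$, so in particular $V$ is $\pi(b)$-invariant and $\pi(b)^k|_V=S^k$ for every $k\geq 0$. Setting $y=\exp(b)\in G_\identity(A)$, and using that $\pi$ is a continuous algebra homomorphism, one has $\pi(y)=\pi(\exp(b))=\exp(\pi(b))$. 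Evaluating on $V$ yields $\pi(y)|_V=\exp(\pi(b)|_V)=\exp(S)=T$, whence $\pi(y)\xi_i=T\xi_i=\eta_i$ for each $i$, as required.

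The main obstacle is the lifting step: the conclusion of Jacobson density must be \emph{exact} (not merely approximate) so that $\pi(b)$ truly restricts to $S$ on $V$ — otherwise one cannot take the logarithm formally and the computation $\pi(\exp b)|_V=\exp(S)$ breaks down. This is exactly why the algebraic form of Jacobson density (available via Kaplansky/Schur for continuous irreducible representations of complex Banach algebras) is needed, rather than a merely topologically transitive version. Once this point is established, exponentiation in $A$ furnishes $y\in G_\identity$ for free and no further analytic work is required.
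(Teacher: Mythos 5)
The paper does not prove this statement; it is quoted from Aupetit's monograph and used as a black box, so there is no in-text proof to compare against. Your argument is correct and is, in fact, the standard proof of Sinclair's density theorem: extend both independent families to bases of $V=\spann\{\xi_1,\ldots,\xi_n,\eta_1,\ldots,\eta_n\}$ to obtain an invertible $T$ on $V$ with $T\xi_i=\eta_i$; take a logarithm $S$ of $T$ (possible for any invertible complex matrix, e.g.\ via Jordan form); lift $S$ \emph{exactly} to some $b\in A$ with $\pi(b)|_V=S$ by the algebraic Jacobson density theorem (valid here because ``continuous irreducible'' in Aupetit's sense means algebraically irreducible, so $X$ is a simple $A$-module, and the commutant reduces to $\field\cdot\mathrm{id}_X$ by Schur together with Gelfand--Mazur); finally set $y=\exp b\in G_\identity(A)$ and use continuity of $\pi$ to get $\pi(y)|_V=\exp(\pi(b)|_V)=\exp S=T$. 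Your closing remark about exactness being essential — so that $V$ is genuinely $\pi(b)$-invariant and the power-series computation closes up inside $V$ — is precisely the right point to verify, and it does hold. There are no gaps.
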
	
	 We set forth a few additional conventions which will be the standard for the rest of this paper. Let $  L,M \subseteq \field $. To denote the number of distinct elements in $ L $, we will simply write $ \#L $, and we also define the following sets:
\begin{align*}
	LM\coloneqq&\left\{\lambda\mu\colon \lambda\in L,\mu\in M\right\}, \\
	L+M\coloneqq&\left\{\lambda+\mu\colon\lambda\in L,\mu\in M\right\}, \\
	\alpha L\coloneqq& \left\{\alpha\lambda\colon \lambda\in L\right\} \text{ for } \alpha\in\field.
\end{align*}
For the sake of brevity, we will let $ L^2 $ denote the set $ LM $ whenever $ L=M $. If $ a,b\in A $ commute, then the following spectral containments concerning their sum and product are true:
$$\sigma(a+b)\subseteq \sigma(a)+\sigma(b), \quad \sigma(ab)\subseteq \sigma(a)\sigma(b).$$ 
For any metric space $ \mathcal{M} $, we will denote the open and closed balls with centre $ x\in \mathcal{M} $ and radius $ \epsilon $ by $ B_\mathcal{M}(x,\epsilon) $ and $ \overline{B}_\mathcal{M}(x,\epsilon)  $, respectively. 
	\section{Commutativity of Similarity Orbits}\label{sec1}

The lemma below is the backbone of the proof of the main result—Theorem \ref{Musing}—in that most of the nuances regarding the representation theory used lie herein. The proof hinges on the fact that, if $ X $ is a Banach space and $ T\in\mathcal{L}(X) $, the Banach algebra of bounded linear operators on $ X $, then $ T $ is a scalar multiple of the identity operator whenever every nonzero $ \xi\in X $ is an eigenvector of $ T $. Hence if $ T $ is non-central we must be able to find $ \xi\in X $ such that $ \{T\xi, \xi\} $ is linearly independent in $ X $.
	\begin{lemma}\label{theplotthickens}
		Let $ \pi $ be a continuous irreducible representation of a Banach algebra $ A $ on a Banach space $ X $. Suppose that $ \pi(a) $ is non-central in $ \mathcal{L}(X) $ and $ \alpha\in\field \setminus\{0\}$. Then there exist $ \eta\in X\setminus\{0\} $ and $ \{x,y,z\}\subseteq G_\identity(A) $ such that $ \{\eta,\pi(a)\eta\} $ is a linearly independent set and
		\begin{align*}
			\pi(x)^{-1}\pi(a)\pi(x)\pi(a)\eta=&\alpha\eta, \\
			\left(\pi(y)^{-1}\pi(a)\pi(y)+\pi(a)\right)\eta=&\alpha\eta, \\
			\left(\pi(z)^{-1}\pi(a)\pi(z)-\pi(a)\right)\eta=&\alpha\eta.
		\end{align*}
		Moreover, there exist $ \gamma\in X\setminus\{0\} $ and $ w\in G_\identity(A) $ such that $ \{\gamma,\pi(a)\gamma\} $ is linearly independent and $\pi\left((w^{-1}aw)\circ a\right)\gamma = (1-\alpha)\gamma.  $
	\end{lemma}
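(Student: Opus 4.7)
The plan is to use the Sinclair Density Theorem (Theorem~\ref{SDT}) to realise each of the four identities as a prescribed action of an element of $G_{\identity}(A)$ on a suitable linearly independent pair of vectors in $X$. The non-centrality of $\pi(a)$, together with the observation recorded just before the lemma (an operator on $X$ every nonzero vector of which is an eigenvector is a scalar), yields some $\eta\in X\setminus\{0\}$ with $\{\eta,\pi(a)\eta\}$ linearly independent. I would fix such an $\eta$ once and for all and use it as the witness for the first three identities.

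For the first identity, I would left-multiply $\pi(x)^{-1}\pi(a)\pi(x)\pi(a)\eta = \alpha\eta$ by $\pi(x)$ to reformulate it as the single requirement $\pi(a)\bigl(\pi(x)\pi(a)\eta\bigr) = \alpha\,\pi(x)\eta$. This motivates prescribing $\pi(x)\eta = \alpha^{-1}\pi(a)\eta$ and $\pi(x)\pi(a)\eta = \eta$; since $\alpha\neq 0$ and $\{\eta,\pi(a)\eta\}$ is linearly independent, the target pair $\{\alpha^{-1}\pi(a)\eta,\eta\}$ is also linearly independent, so Theorem~\ref{SDT} supplies the required $x\in G_{\identity}(A)$. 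The second and third identities are handled by the same template: for the second I would seek $y\in G_{\identity}(A)$ with $\pi(y)\eta = \eta$ and $\pi(y)\pi(a)\eta = \alpha\eta - \pi(a)\eta$; for the third, $z\in G_{\identity}(A)$ with $\pi(z)\eta = \eta$ and $\pi(z)\pi(a)\eta = \pi(a)\eta - \alpha\eta$. In both cases the target pair is a unitriangular combination of $\eta$ and $\pi(a)\eta$, hence linearly independent, so Theorem~\ref{SDT} produces $y$ and $z$, and each prescribed equation then follows by a one-line direct computation.

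For the last identity, I would exploit the elementary rewriting $(w^{-1}aw)\circ a = \identity - (w^{-1}bw)b$ with $b = \identity - a$, so that $\pi\bigl((w^{-1}aw)\circ a\bigr)\gamma = (1-\alpha)\gamma$ is equivalent to $\pi(w)^{-1}\pi(b)\pi(w)\pi(b)\gamma = \alpha\gamma$. This is precisely the first identity of the lemma with $a$ replaced by $b$. Since $\pi(b)$ is the identity operator minus $\pi(a)$, it is non-central exactly when $\pi(a)$ is; and since $\pi(b)\gamma = \gamma - \pi(a)\gamma$, the set $\{\gamma,\pi(b)\gamma\}$ is linearly independent precisely when $\{\gamma,\pi(a)\gamma\}$ is. Thus rerunning the argument for the first identity with $b$ in place of $a$ delivers a $\gamma\in X\setminus\{0\}$ with $\{\gamma,\pi(a)\gamma\}$ linearly independent and a $w\in G_{\identity}(A)$ making the identity hold.

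The one conceptual step is the structural observation that each identity is linear in the action of the unknown element of $G_{\identity}(A)$ on the two-dimensional subspace $\spann\{\eta,\pi(a)\eta\}$ (respectively $\spann\{\gamma,\pi(a)\gamma\}$). Once this is in hand, the verifications reduce to short $2\times 2$ linear-algebra checks of linear independence for the prescribed target pairs, and Sinclair's theorem finishes every case; I do not foresee any serious obstacle beyond careful bookkeeping.
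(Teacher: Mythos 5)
Your proof is correct and follows essentially the same route as the paper's: fix a single $\eta$ with $\{\eta,\pi(a)\eta\}$ linearly independent, use Sinclair's Density Theorem to prescribe the action of a $G_\identity$-element on this two-dimensional space for each of the three identities, and obtain the $\circ$-identity by applying the first identity to $\identity-a$. The only cosmetic differences are in the target vectors handed to Sinclair --- the paper picks a square root $\lambda$ of $\alpha$ and a symmetric pair $\pi(x)\pi(a)\eta=\lambda\eta$, $\pi(x)(\lambda\eta)=\pi(a)\eta$ for the first identity, and a single parametrized family $y_\beta$ specialized at $\beta=\mp1$ for the second and third --- but your choices are equally valid and the verification computations all check out.
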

	\begin{proof}
		If $ \{\pi(a)\eta,\eta\} $ is a linearly dependent set in $ X $ for all $ \eta\in X $, then it can be shown that $ \pi(a) $ is a scalar multiple of the identity operator, implying that $ \pi(a) $ is central—which is obviously a contradiction. So let $ \eta\neq 0 $ be such that $ \{\pi(a)\eta,\eta\} $ is a linearly independent set. We can certainly find some $ \lambda\in\field $ such that $ \lambda^2=\alpha $, and then $ \{\pi(a)\eta,\lambda\eta\} $ is also linearly independent. By Sinclair's Density Theorem we obtain some $x\in G_\identity(A) $ such that
		\begin{align*}
				\pi(x)\pi(a)\eta=&\lambda\eta, \\
				\pi(x)(\lambda\eta)=&\pi(a)\eta.
			\end{align*}
	Rewriting the second equality here gives $ \lambda\eta=\pi(x)^{-1}\pi(a)\eta $. Therefore, 
	$$\pi\left(x^{-1}axa\right)\eta=\pi\left(x^{-1}a\right)(\lambda\eta)=\lambda^2\eta=\alpha\eta.$$
	One can easily show that $ \{\beta\pi(a)\eta-\beta\alpha\eta,\eta\} $ is linearly independent for all complex numbers $ \beta\neq 0 $. Then recall $ \{\pi(a)\eta,\eta\} $ is linearly independent, so that an application of Sinclair's Density Theorem yields some $ y_\beta\in G_\identity(A) $ such that
	\begin{align*}
		\pi\left(y_\beta\right)\pi(a)\eta =&\beta\pi(a)\eta-\beta\alpha\eta \\
		\pi\left(y_\beta\right)\eta=&\eta.
	\end{align*}
	With the second equality above we have that
	$$\pi(a)\pi\left(y_\beta\right)\eta=\pi(a)\eta,$$
	and then the following is true:
	\begin{align*}
		\pi\left(y_\beta\right)^{-1}\left[\pi\left(y_\beta\right)\pi(a)+\pi(a)\pi\left(y_\beta\right)\right]\eta =&\pi\left(y_\beta\right)^{-1} \left[(\beta+1)\pi(a)\eta-\beta\alpha\eta\right], \\
		\pi\left(y_\beta\right)^{-1}\left[\pi(a)\pi\left(y_\beta\right)-\pi\left(y_\beta\right)\pi(a)\right]\eta =&\pi\left(y_\beta\right)^{-1} \left[(1-\beta)\pi(a)\eta+\beta\alpha\eta\right].
	\end{align*}
Since $\pi\left(y_\beta\right)^{-1}\eta=\eta$, taking $ y=y_\beta$ when $\beta=-1 $ gives us $$\left(\pi(y)^{-1}\pi(a)\pi(y)+\pi(a)\right)\eta=\alpha\eta.$$
On the other hand if we set $ z=y_\beta $ when $ \beta=1 $ we obtain
$$\left(\pi(z)^{-1}\pi(a)\pi(z)-\pi(a)\right)\eta=\alpha\eta.$$
Finally, if we take $ a^\prime=\identity-a $, then $ \pi(a^\prime) $ is non-central, and hence, by the first part, there is $ w\in G_\identity(A) $ and $ \gamma\in X\setminus\{0\} $ such that $ \{\pi(a^\prime)\gamma,\gamma\} $ is linearly independent and 
$$\pi(w^{-1}a^\prime w a^\prime)\gamma = \alpha\gamma.$$
Hence it follows that $ \{\pi(a)\gamma,\gamma\} $ is linearly independent and
\begin{align*}
	\pi((w^{-1}aw)\circ a)\gamma =& \pi(\identity-(\identity-w^{-1}aw)(\identity-a))\gamma \\
	 =& \pi(\identity-w^{-1}a^\prime w a^\prime)\gamma = \gamma-\alpha\gamma=(1-\alpha)\gamma.
\end{align*}
	\end{proof}
Since most of the technicalities were taken care of in the above lemma, the proof of Theorem \ref{Musing} is straightforward. The key point here is that the eigenvalues of an operator $ \pi(a) $ (for any continuous irreducible representation $ \pi $ of $ A $) belong to $ \sigma_A(a) $. Considering the hypothesis of the lemma once more, we observe that we have near-total freedom in our choice of the complex number $ \alpha $ (the only restriction being that it is nonzero, with 1 also being excluded for the form $ a\circ r $). From this, conditions (\ref{(i)})--(\ref{(iv)}) become obvious.
\begin{theorem}\label{Musing}
	Let $ A $ be a semisimple Banach algebra and suppose that $ a,b\in A $. Then every element of $ \orb(a) $ commutes with every element of $ \orb(b) $ if and only if any one of the following holds:
	\begin{align}
		\label{(i)}
		&\bigcup_{r\in \orb(b)}\sigma^\prime(ar)\neq \field\setminus\{0\} \\ \label{(ii)}
		&\bigcup_{r\in \orb(b)}\sigma^\prime(r+a)\neq \field\setminus\{0\} \\ \label{(iii)}
		&\bigcup_{r\in \orb(b)}\sigma^\prime(r-a)\neq \field\setminus\{0\}  \\ \label{(iv)}
		&\bigcup_{r\in \orb(b)}\left(\sigma^\prime(a\circ r)\setminus\{1\}\right)\neq \field\setminus\{0,1\}. 
	\end{align}
\end{theorem}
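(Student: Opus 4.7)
The plan is to establish the biconditional by proving the easy forward direction via standard commuting-element spectral containments, and the reverse direction via four contrapositive arguments in the spirit of Lemma~\ref{theplotthickens}, but with $\pi(b)$ taking the place of one copy of $\pi(a)$.

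For the forward implication, if every element of $\orb(a)$ commutes with every element of $\orb(b)$, then, in particular, $a$ commutes with every $r\in\orb(b)$, and similarity-invariance of the spectrum gives $\sigma(r)=\sigma(b)$. The standard containments for commuting pairs then yield $\sigma(ar)\subseteq\sigma(a)\sigma(b)$, $\sigma(r\pm a)\subseteq\sigma(b)\pm\sigma(a)$, and—using $\identity-a\circ r=(\identity-a)(\identity-r)$—that $\sigma(a\circ r)$ is contained in $1-(1-\sigma(a))(1-\sigma(b))$. Each of these is a bounded subset of $\field$, so none of the unions in (\ref{(i)})--(\ref{(iv)}) can exhaust $\field\setminus\{0\}$ or $\field\setminus\{0,1\}$.

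For the reverse implication I argue the contrapositive: suppose the orbits fail to commute, so there exist $a'\in\orb(a)$ and $b'\in\orb(b)$ with $a'b'\neq b'a'$. By semisimplicity I select a continuous irreducible representation $\pi$ of $A$ on a Banach space $X$ with $\pi(a'b')\neq \pi(b'a')$. Because $\pi(a')$ and $\pi(b')$ are $\pi(G_\identity(A))$-conjugates of $\pi(a)$ and $\pi(b)$ respectively, a scalar image would force commutation; hence both $\pi(a)$ and $\pi(b)$ are non-central in $\mathcal{L}(X)$. Fixing an arbitrary $\alpha\in\field\setminus\{0\}$ (and, for (\ref{(iv)}), also $\alpha\neq 1$), it suffices to produce $r\in\orb(b)$ and a nonzero vector realising the required eigenvalue equation, since eigenvalues of $\pi(T)$ always lie in $\sigma_A(T)$.

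For (\ref{(i)}): non-scalarity of $\pi(a)$ yields $v\in X$ lying outside every eigenspace of $\pi(a)$—otherwise every nonzero vector would be an eigenvector and $\pi(a)$ would reduce to a scalar—so setting $u=\alpha^{-1}\pi(a)v$ gives $\pi(a)v=\alpha u$ with $\{u,v\}$ linearly independent. Non-scalarity of $\pi(b)$ provides $\eta_0\in X$ with $\{\eta_0,\pi(b)\eta_0\}$ linearly independent, and Theorem~\ref{SDT} supplies $w\in G_\identity(A)$ with $\pi(w)\eta_0=u$ and $\pi(w)\pi(b)\eta_0=v$. Letting $r=wbw^{-1}\in\orb(b)$, we obtain $\pi(r)u=v$, hence $\pi(ar)u=\alpha u$ and $\alpha\in\sigma_A(ar)$. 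Conditions (\ref{(ii)}) and (\ref{(iii)}) follow by the same template, with $v$ replaced by $(\alpha\identity\mp\pi(a))\xi$ for a $\xi$ with $\{\xi,\pi(a)\xi\}$ linearly independent, which yields $\pi(r\pm a)\xi=\alpha\xi$. Condition (\ref{(iv)}) reduces to (\ref{(i)}) applied to $\identity-a$, $\identity-b$, and the scalar $1-\alpha$, via the identity $\identity-a\circ r=(\identity-a)(\identity-r)$ and the observation that $\identity-r$ ranges over $\orb(\identity-b)$; the exclusion of $\alpha=1$ is precisely what guarantees that the parameter $1-\alpha$ is nonzero. The main technical hurdle throughout is ensuring that both source and target pairs for Sinclair's Density Theorem are linearly independent, and this is exactly where the non-centrality of both $\pi(a)$ and $\pi(b)$ is unavoidable.
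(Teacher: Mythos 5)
Your proof is correct, and it takes a genuinely different route from the paper's. The paper first proves Lemma~\ref{theplotthickens}, which uses Sinclair's Density Theorem to produce a nonzero vector $\eta$ and an element $x\in G_\identity(A)$ satisfying $\pi(x)^{-1}\pi(a)\pi(x)\pi(a)\eta=\alpha\eta$ (and the analogous sum, difference, and $\circ$ identities) entirely within the orbit of $a$; the proof of Theorem~\ref{Musing} then invokes Sinclair's theorem a second time, choosing $w$ so that $\pi(w^{-1}bw)\eta=\pi(a)\eta$, thereby replacing one factor of $a$ by a conjugate of $b$, and finishes with Jacobson's Lemma to relocate the conjugation. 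You instead build the mixed eigenvalue equation directly: picking $v$ with $\{v,\pi(a)v\}$ independent and $u=\alpha^{-1}\pi(a)v$, you use a single application of Sinclair's theorem to find $w$ with $\pi(wbw^{-1})u=v$, giving $\pi(a\,wbw^{-1})u=\alpha u$ with $wbw^{-1}$ already in $\orb(b)$; the same template with target vector $(\alpha\identity\mp\pi(a))\xi$ handles the sum and difference, and the $\circ$ case reduces cleanly to the product case applied to $\identity-a$, $\identity-b$ via the observation $\orb(\identity-b)=\identity-\orb(b)$. Your route is shorter, needs only one invocation of the density theorem per case, and avoids Jacobson's Lemma in the construction. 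What it gives up is the standalone Lemma~\ref{theplotthickens}, which the paper also needs for Corollary~\ref{MusingPt2} (where the conclusion $\pi(w^{-1}awa)\eta=\alpha\eta$, involving two copies of $a$, is used to contradict $\sigma(w^{-1}awa)\subseteq\sigma(a)^2$); so the paper's factoring of the argument is a deliberate structural choice rather than an inefficiency. Both proofs are sound; yours is the more economical for this particular theorem in isolation.
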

\begin{proof}
	For the forward implication, we may use commutation of $ r $ and $ a $ to obtain
	\begin{align*}
		\sigma(ar) &\subseteq \sigma(a)\sigma(r),&\quad
		\sigma(r+a) &\subseteq \sigma(a)+\sigma(r),\\
		\sigma(r-a) &\subseteq \sigma(r)-\sigma(a),&\quad
		\sigma(a\circ r) &\subseteq \sigma(a)+\sigma(r)-\sigma(a)\sigma(r).
	\end{align*}
	Recall that $\sigma(r)=\sigma(b)$ for all $ r\in \orb(b) $, and hence our claim is true. With regards to the converse, we will show commutation of $ a $ and $ b $ under the assumption of (\ref{(i)})-(\ref{(iv)}) individually. So suppose that $ ab\neq ba $, and let (\ref{(i)}) hold. By semisimplicity there is a continuous irreducible representation $ \pi $ of $ A $ on some Banach space $ X $ such that $ \pi(a) $ and $ \pi(b) $ are not central. Let $ \alpha\in\field\setminus\{0\}$ such that $ \alpha\not\in \sigma(ar) $ for all $ r\in \orb(b) $. Apply Lemma \ref{theplotthickens} to obtain some $ \eta\neq 0 $ and $ y\in G_\identity(A) $ such that $ \{\pi(a)\eta,\eta\} $ is linearly independent and 
	$$\pi(y)^{-1}\pi(a)\pi(y)\pi(a)\eta=\alpha\eta.$$
	Moreover, fix $\gamma\neq 0$ such that $ \left\{\pi(b)\gamma,\gamma\right\} $ is linearly independent, so that with Sinclair's Density Theorem there must exist $ w\in G_\identity(A) $ such that 
	\begin{align*}
		\pi(w)\pi(a)\eta =& \pi(b)\gamma, \\
		\pi(w)\eta =& \gamma.
	\end{align*}
	Therefore, one has that
	\begin{align*}
		\pi\left(y^{-1}ayw^{-1}bw\right)\eta = \pi\left(y^{-1}ay\right) \pi\left(w^{-1}b\right)\gamma = \pi\left(y^{-1}ay\right)\pi(a)\eta = \alpha\eta.
	\end{align*}
	Notice that this together with Jacobson's Lemma forces $ \alpha\in\sigma\left(ayw^{-1}bwy^{-1}\right) $ which is a contradiction to our choice of $ \alpha $. Thus, $ a $ and $ b $ commute. \\
	Now assume that (\ref{(ii)}) holds but $ ab\neq ba $ and let $ \alpha\in\field\setminus\{0\}$ such that $ \alpha\not\in \sigma(a+r) $ for all $ r\in \orb(b) $. Once more, there is a continuous irreducible representation $ \pi $ on $ A $ for some Banach space $ X $ such that $ \pi(a) $ and $ \pi(b) $ are not central. With Lemma \ref{theplotthickens} we obtain some $ \eta\neq 0 $ and $ y\in G_\identity(A) $ such that $ \{\pi(a)\eta,\eta\} $ is linearly independent and 
$$
		\left(\pi(y)^{-1}\pi(a)\pi(y)+\pi(a)\right)\eta=\alpha\eta.
$$
	Also fix $ \gamma\neq 0 $ such that $ \{\pi(b)\gamma,\gamma\} $ is a linearly independent set. By Sinclair's Density Theorem there exists $ w\in G_\identity(A) $ such that
	\begin{align*}
		\pi(w)\pi(a)\eta =& \pi(b)\gamma, \\
		\pi(w)\eta =& \gamma.
	\end{align*}
	Hence it follows that
	\begin{align*}
		\pi\left(y^{-1}ay+w^{-1}bw\right)\eta =& \pi\left(y^{-1}ay\right)\eta+\pi\left(w^{-1}bw\right)\eta  = \pi\left(y^{-1}ay\right)\eta +\pi\left(w^{-1}b\right)\gamma \\=& \pi\left(y^{-1}ay\right)\eta +\pi(a) \eta = \alpha\eta.
	\end{align*}
	Notice that this implies $ \alpha\in\sigma\left(a+yw^{-1}bwy^{-1}\right) $ which is a contradiction to our choice of $ \alpha $. Thus, $ a $ and $ b $ commute. \\
	The case for (\ref{(iii)}) is analogous to the one above. So assume that (\ref{(iv)}) holds but $ ab\neq ba $. Then $ \pi(a) $ and $ \pi(b) $ are non-central for a continuous irreducible representation $ \pi $ of $ A $ on a Banach space $ X $. Pick $ \alpha^\prime\not\in \field\setminus\{0,1\} $, so that $ 1-\alpha^\prime\neq 0 $. Therefore, Lemma \ref{theplotthickens} says that there exist $ \zeta\in X\setminus\{0\} $ and $ z\in G_\identity(A) $ such that $ \{\pi(a)\zeta,\zeta\} $ is linearly independent and
		$$\pi((z^{-1}az)\circ a)\zeta = \alpha^\prime \zeta.$$
	Moreover, let $ \omega\in X\setminus\{0\} $ be chosen so that $ \{\pi(b)\omega,\omega\} $ is linearly independent in $ X $; hence, Sinclair's Density Theorem gives rise to $ u\in G_\identity(A) $ such that 
	\begin{align*}
		\pi(u)\pi(a)\zeta =& \pi(b)\omega, \\
		\pi(u)\zeta =& \omega.
	\end{align*}
	Now observe that we have the following:
	\begin{align*}
		\pi((zaz^{-1})\circ(u^{-1}bu))\zeta =& \pi(zaz^{-1})\zeta+\pi(u^{-1}bu)\zeta-\pi(zaz^{-1}u^{-1}bu)\zeta \\=& \pi(zaz^{-1})\zeta + \pi(a)\zeta-\pi(zaz^{-1})\pi(a)\zeta 
		\\=& \pi((zaz^{-1})\circ a)\zeta = \alpha^\prime \zeta.
	\end{align*}
	Thus, Jacobson's Lemma says that $ \alpha^\prime\in \sigma(a\circ(z^{-1}u^{-1}buz)) $. But because $ \alpha^\prime $ was arbitrary in $ \field\setminus\{0,1\} $, we now have a contradiction to (\ref{(iv)}). So $ a $ and $ b $ do commute. \\
	Finally, observe that, for all the above arguments, we may use Jacobson's Lemma to replace $ a $ and $ b $ by arbitrary members coming from $ \orb(a) $ and $ \orb(b) $ respectively, so that the proof is now complete.
\end{proof}

Under commutation of the similarity orbits, it follows from the commutation of $a $ and $ r $ (for each $ r\in\orb(b) $) that $ \sigma(ar) $, $ \sigma(r+a) $, $ \sigma(r-a) $, and $ \sigma(a\circ r) $ are all contained in respective fixed compact subsets of $ \field $ (specific to each of the four forms). The precise details of this is actually used in the proof below. Now by replacing $ \sigma $ here by $ \hat{\sigma} $ (the polynomially convex hull of the spectrum), we get not just set containment—but equality to some fixed compact subsets of the complex plane.
\begin{corollary}
	Let $ A $ be a semisimple Banach algebra and suppose that $ a,b\in A $. If any one of the conditions \textnormal{(\ref{(i)})}-\textnormal{(\ref{(iv)})} holds, then for each $ r\in \orb(b) $ we have that
	\begin{align*}
		\hat{\sigma}(ar)&=\hat{\sigma}(ab), \quad& \hat{\sigma}(r+a)&=\hat{\sigma}(b+a),\\  \hat{\sigma}(r-a)&=\hat{\sigma}(b-a), \quad& \hat{\sigma}(a\circ r)&=\hat{\sigma}(a \circ b).
	\end{align*}
\end{corollary}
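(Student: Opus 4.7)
The plan is to combine Theorem~\ref{Musing} with a direct conjugation argument and a continuity step. First, by Theorem~\ref{Musing}, any one of the hypotheses (\ref{(i)})--(\ref{(iv)}) forces every element of $\orb(a)$ to commute with every element of $\orb(b)$. In particular, for each $r\in\orb(b)$ the spectra $\sigma(ar)$, $\sigma(r\pm a)$, and $\sigma(a\circ r)$ are contained in the respective fixed compact subsets $\sigma(a)\sigma(b)$, $\sigma(b)\pm\sigma(a)$, and $\sigma(a)+\sigma(b)-\sigma(a)\sigma(b)$ of $\field$, so the four families of hulls already sit inside fixed compact sets.

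Next I would exploit a similarity. Writing $r=wbw^{-1}$ for some $w\in G_\identity$ and setting $a':=w^{-1}aw\in\orb(a)$, one checks directly that
\[
w^{-1}(ar)w=a'b,\quad w^{-1}(r\pm a)w=b\pm a',\quad w^{-1}(a\circ r)w=a'\circ b,
\]
where in each case $a'$ commutes with $b$ by orbit commutation. Since conjugation by an element of $G_\identity$ preserves the spectrum, these similarities immediately give
\[
\hat\sigma(ar)=\hat\sigma(a'b),\quad \hat\sigma(r\pm a)=\hat\sigma(b\pm a'),\quad \hat\sigma(a\circ r)=\hat\sigma(a'\circ b).
\]

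It remains to show that $\hat\sigma(a'b)=\hat\sigma(ab)$ (and likewise for the other three operations). For this I would join $\identity$ to $w$ by a continuous path $w_t$ in $G_\identity$ and consider the continuous family $a_t:=w_t^{-1}aw_t\in\orb(a)$, with $a_0=a$ and $a_1=a'$, each commuting with $b$ by orbit commutation. The family $t\mapsto a_tb$ then varies continuously in $A$ with $\sigma(a_tb)\subseteq\sigma(a)\sigma(b)$, and, by Jacobson's Lemma together with the hypothesis, a fixed $\alpha\in\field\setminus\{0\}$ (respectively $\field\setminus\{0,1\}$ in the $a\circ r$ case) satisfies $\alpha\notin\sigma(a_tb)$ for every $t\in[0,1]$. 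Upper semi-continuity of the spectrum, combined with this persistent exterior point and the uniform containment in a fixed compact set, then forces the unbounded component of $\field\setminus\sigma(a_tb)$ to be independent of $t$; hence $\hat\sigma(a_tb)$ is constant, and in particular $\hat\sigma(ab)=\hat\sigma(a'b)=\hat\sigma(ar)$.

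The main obstacle is this last step: $\hat\sigma$ is only upper semi-continuous in general, so constancy does not follow from the continuity of $t\mapsto a_tb$ alone. The role of the common exterior point $\alpha$ is to prevent any hole of $\field\setminus\sigma(a_tb)$ from opening or closing along the path, and I would make this precise through a winding-number/Runge-type argument in $\field\setminus\sigma(a_tb)$, leveraging the connectedness of $[0,1]$ and the uniform spectral containment.
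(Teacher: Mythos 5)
Your first two steps are sound and agree with the paper in substance: Theorem~\ref{Musing} gives commutation, the spectral containments into fixed compact sets follow, and the conjugation identity $w^{-1}(ar)w = a'b$ with $a' = w^{-1}aw$ correctly reduces the problem to comparing $\hat\sigma(a'b)$ with $\hat\sigma(ab)$ (this is essentially a reformulation, via Jacobson's Lemma, of the paper's reduction). The genuine gap is in the final step, and you correctly flag it yourself but do not close it. Joining $\identity$ to $w$ by a \emph{continuous} path $w_t$ and relying on upper semi-continuity of $\sigma$, a persistent exterior point $\alpha$, and containment in a fixed compact set does not force $\hat\sigma(a_tb)$ to be constant. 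Upper semi-continuity forbids the spectrum from abruptly growing as $t$ varies, but not from abruptly collapsing; for instance nothing in your stated constraints excludes $\sigma(a_{t_0}b)=\{0\}$ at an isolated parameter value while $\sigma(a_tb)$ is a circle nearby, in which case the unbounded component of $\field\setminus\sigma(a_tb)$ and hence $\hat\sigma(a_tb)$ genuinely change. A winding-number or Riesz-projection argument can detect whether a fixed contour in $\field\setminus K$ encloses spectrum or not (that information is locally constant along a norm-continuous path), but it does not control the \emph{shape} of the enclosed part, which is what the polynomially convex hull depends on.

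The paper closes exactly this gap by replacing the continuous path with an \emph{entire} one: writing $w = e^{x_1}\cdots e^{x_n}$ and setting $f(\lambda)=e^{\lambda x_1}\cdots e^{\lambda x_n}\,b\,e^{-\lambda x_n}\cdots e^{-\lambda x_1}$, the map $\lambda\mapsto af(\lambda)$ (and similarly for $f(\lambda)\pm a$ and $a\circ f(\lambda)$) is analytic with spectra in a fixed compact set, so Liouville's Spectral Theorem (\cite[Theorem 3.4.14]{a91}) applies and gives constancy of the polynomially convex hull outright. That theorem is precisely the replacement for the ``winding-number/Runge-type argument'' you gesture at, and it requires analyticity rather than mere continuity. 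To repair your argument, replace $w_t$ by the analytic family $w_\lambda = e^{\lambda x_1}\cdots e^{\lambda x_n}$ and invoke Liouville's Spectral Theorem; the conjugation step you already have then recovers the statement.
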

\begin{proof}
	If any of \textnormal{(\ref{(i)})}-\textnormal{(\ref{(iv)})} is true, then by Theorem \ref{Musing} it follows that $ a $ commutes with any $ r\in\orb(b) $. Suppose that $ r=e^{x_1}\cdots e^{x_n}be^{-x_n}\cdots e^{-x_1} $ is a fixed member of $ \orb(b) $ and define the entire function $ f\colon \field\to \orb(b) $ as follows:
	$$f(\lambda)\coloneqq e^{\lambda x_1}\cdots e^{\lambda x_n}be^{-\lambda x_n}\cdots e^{- \lambda x_1}.$$
	Since $ a $ commutes with $ f(\lambda) $ and $ \sigma(f(\lambda)) =\sigma(b) $, for all $ \lambda\in \field $, we have the following:
	\begin{align*}
		\sigma(f(\lambda)+a)&\subseteq \sigma(b)+\sigma(a), \quad&
		\sigma(f(\lambda)-a)&\subseteq \sigma(b)-\sigma(a), \\
			\sigma(a\circ f(\lambda))&\subseteq \sigma(a)+\sigma(b)-\sigma(a)\sigma(b) \quad& \sigma(af(\lambda))&\subseteq\sigma(a)\sigma(b).
	\end{align*}
	Hence Liouville's Spectral Theorem ({\cite[Theorem 3.4.14]{a91}}) implies that $ \hat{\sigma}(af(\lambda)) $, $ \hat{\sigma}(f(\lambda)+a) $, $ \hat{\sigma}(f(\lambda)-a) $, $ \hat{\sigma}(a\circ f(\lambda)) $ are all constant, so we have that
	\begin{align*}
		\hat{\sigma}(ar)&=\hat{\sigma}(af(1))=\hat{\sigma}(af(0))=\hat{\sigma}(ab), \\\hat{\sigma}(r+a)&=\hat{\sigma}(f(1)+a)=\hat{\sigma}(f(0)+a)=\hat{\sigma}(b+a),\\  \hat{\sigma}(r-a)&=\hat{\sigma}(f(1)-a)=\hat{\sigma}(f(0)-a)=\hat{\sigma}(b-a), \\ \hat{\sigma}(a\circ r)&=\hat{\sigma}(a\circ f(1))=\hat{\sigma}(a\circ f(0))=\hat{\sigma}(a \circ b).
	\end{align*}
	Since $ r $ was arbitrarily chosen from $ \orb(b) $, we now have the result.
\end{proof}
Our next corollary considers the case where $ a $ and $ b $ have the same similarity orbit. It turns out that $ a $ commutes with every member of $ A $ when we have that $ \orb(a)=\orb(b) $ and if one of (\ref{(i)})-(\ref{(iv)}) holds. 
\begin{corollary}\label{MusingPt2}
	Let $ A $ be a semisimple Banach algebra and suppose that $ a\in A $. Then $ a $ is central if and only if any one of the following holds:
	\begin{align}
		\label{(v)}
		&\bigcup_{r\in \orb(a)}\sigma^\prime(ar)\neq \field\setminus\{0\} \\ \label{(vi)}
		&\bigcup_{r\in \orb(a)}\sigma^\prime(r+a)\neq \field\setminus\{0\} \\ \label{(vii)}
		&\bigcup_{r\in \orb(a)}\sigma^\prime(r-a)\neq \field\setminus\{0\}  \\ \label{(viii)}
		&\bigcup_{r\in \orb(a)}\left(\sigma^\prime(a\circ r)\setminus\{1\}\right)\neq \field\setminus\{0,1\}. 
	\end{align}
\end{corollary}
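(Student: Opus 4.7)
The plan is to reduce to Theorem \ref{Musing} with $b = a$, with one caveat: that theorem would yield only that every element of $\orb(a)$ commutes with every element of $\orb(a)$, which is a priori weaker than the centrality of $a$. Rather than fighting to extract centrality from self-commutativity of the orbit, I would simply re-run the argument of Theorem \ref{Musing} directly in the present single-element setting, where it in fact simplifies: there is no longer any need to invoke Sinclair's Density Theorem a second time to intertwine $\pi(a)$ with a separate element $\pi(b)$.

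For the forward direction, centrality of $a$ gives $\orb(a) = \{a\}$ at once, so each of the unions in (\ref{(v)})--(\ref{(viii)}) collapses to one of $\sigma^\prime(a^2)$, $\sigma^\prime(2a)$, $\sigma^\prime(0) = \emptyset$, or $\sigma^\prime(a \circ a) \setminus \{1\}$---each a compact (hence bounded) subset of $\field$, and therefore a proper subset of $\field \setminus \{0\}$ (respectively $\field \setminus \{0, 1\}$). For the reverse direction, I would assume toward a contradiction that one of (\ref{(v)})--(\ref{(viii)}) holds while $a$ fails to be central. Non-centrality of $a$ produces some $c \in A$ with $ac \neq ca$, and semisimplicity then yields a continuous irreducible representation $\pi$ of $A$ on a Banach space $X$ for which $\pi(a)$ is non-central in $\mathcal{L}(X)$. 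Lemma \ref{theplotthickens} now applies with any $\alpha \in \field \setminus \{0\}$ of our choosing. In case (\ref{(v)}), picking $\alpha$ outside the given union, we obtain $\pi(x^{-1}axa)\eta = \alpha\eta$, whence $\alpha \in \sigma_A(x^{-1}axa)$. Writing $x^{-1}axa = (x^{-1}ax) \cdot a$ and applying Jacobson's Lemma gives $\alpha \in \sigma^\prime(a \cdot (x^{-1}ax)) = \sigma^\prime(ar)$ with $r = x^{-1}ax \in \orb(a)$, contradicting the choice of $\alpha$. Cases (\ref{(vi)}) and (\ref{(vii)}) are entirely analogous via the second and third eigenvector equations of Lemma \ref{theplotthickens}, together with the identifications $y^{-1}ay,\, z^{-1}az \in \orb(a)$.

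The main obstacle is case (\ref{(viii)}): Lemma \ref{theplotthickens} delivers the eigenvalue $1 - \alpha$ (rather than $\alpha$) in $\sigma\bigl((w^{-1}aw) \circ a\bigr)$, so the natural move is to set $\alpha = 1 - \beta$ for each target $\beta \in \field \setminus \{0, 1\}$ avoiding the union; the hypothesis $\alpha \neq 0$ of the lemma then corresponds precisely to $\beta \neq 1$. The resulting containment $\beta \in \sigma\bigl((w^{-1}aw) \circ a\bigr)$ must then be transferred to $\beta \in \sigma\bigl(a \circ (w^{-1}aw)\bigr)$, which is where one uses the Jacobson-type identity $\sigma(u \circ v) \setminus \{1\} = \sigma(v \circ u) \setminus \{1\}$---a direct consequence of applying the ordinary Jacobson's Lemma to $(\identity - u)(\identity - v)$ and $(\identity - v)(\identity - u)$ and of the relation $u \circ v = \identity - (\identity - u)(\identity - v)$. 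With this in hand, $\beta \in \sigma^\prime(a \circ r) \setminus \{1\}$ for $r = w^{-1}aw \in \orb(a)$, and letting $\beta$ range over $\field \setminus \{0, 1\}$ produces the required contradiction to (\ref{(viii)}).
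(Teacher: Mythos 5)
Your proof is correct, and it takes a route that differs from the paper's in an interesting way. The paper's proof of Corollary~\ref{MusingPt2} first invokes Theorem~\ref{Musing} with $b=a$ to obtain that $a$ commutes with every member of $\orb(a)$; it then applies Lemma~\ref{theplotthickens} a second time, with $\alpha \in \field\setminus(\{0\}\cup\sigma(a)^2)$, obtaining $\alpha\in\sigma(w^{-1}awa)$, and derives a contradiction from the inclusion $\sigma(w^{-1}awa)\subseteq\sigma(w^{-1}aw)\sigma(a)=\sigma(a)^2$, which is available precisely because the commutativity has just been established. Your approach bypasses Theorem~\ref{Musing} (and its second Sinclair step) entirely: you apply Lemma~\ref{theplotthickens} once, transfer the eigenvalue $\alpha$ into $\sigma^\prime(ar)$ (respectively $\sigma^\prime(r+a)$, $\sigma^\prime(r-a)$, $\sigma^\prime(a\circ r)\setminus\{1\}$) for an explicit $r\in\orb(a)$ via Jacobson's Lemma and its $\circ$-variant, and contradict the hypothesis directly. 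What the paper's approach buys is uniformity: once commutativity is in hand it handles all four of (\ref{(v)})--(\ref{(viii)}) with a single multiplicative computation. What your approach buys is economy: you need neither the auxiliary vector $\gamma$ nor the intertwining operator $w$ from the proof of Theorem~\ref{Musing}, and the contradiction lands squarely on the hypothesis rather than on a derived spectral bound. Your treatment of case (\ref{(viii)}), reparametrising $\alpha=1-\beta$ and using $\sigma(u\circ v)\setminus\{1\}=\sigma(v\circ u)\setminus\{1\}$, is exactly right; the identity is justified precisely as you say, and the constraint $\alpha\neq 0$ matches $\beta\neq 1$. One small pedantic note: the elements $x^{-1}ax$, $y^{-1}ay$, $z^{-1}az$, $w^{-1}aw$ do lie in $\orb(a)$ since $G_\identity$ is a group, so $x^{-1}ax=(x^{-1})a(x^{-1})^{-1}$; this is worth spelling out since $\orb(a)$ is written with conjugation on the other side.
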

\begin{proof}
	Clearly, when $ a $ is central, we will have that $ \orb(a)=\{a\} $, so then (\ref{(v)})-(\ref{(viii)}) are all true. Conversely, if any of the conditions (\ref{(v)})-(\ref{(viii)}) holds, then by Theorem \ref{Musing} it follows that $ a $ commutes with every member of $ \orb(a) $. If, to the contrary, $ a $ is not central, then there is a continuous irreducible representation $ \pi $ on a Banach space $ X $ such that $ \pi(a) $ is non-central. Pick $ \alpha\in \field\setminus\left(\{0\}\cup \sigma(a)^2\right) $. By Lemma \ref{theplotthickens} there exist $ w\in \orb(a) $ and $ 0\neq\eta\in X $ such that $$\pi(w^{-1}awa)\eta=\alpha\eta.$$
	Hence, $ \alpha\in \sigma(w^{-1}awa) $. But because $ a $ commutes with $ w^{-1}aw $, we may conclude that $ \sigma(w^{-1}awa) \subseteq \sigma(w^{-1}aw)\sigma(a)=\sigma(a)^2  $, and therefore $ \alpha\in\sigma(a)^2 $—which is clearly a contradiction. Therefore, $ a $ is central.
\end{proof}
The following corollary, which is immediate from Theorem \ref{Musing} and Corollary \ref{MusingPt2}, generalizes some of the statements given in {\cite[Theorem 4.3]{z79}} to general Banach algebra elements:
	\begin{corollary}\label{artificialList}
			Let $ A $ be a semisimple Banach algebra and suppose that $ a,b\in A $. Then every element of $ \orb(a) $ commutes with every element of $ \orb(b) $ if and only if any one of the following holds:
			\begin{enumerate}
				\item[\textnormal{(i)}] $\displaystyle \sup_{r\in \orb(b)}\rho(ar) <\infty$;
				\item[\textnormal{(ii)}] $ \displaystyle\sup_{r\in \orb(b)}\rho(r+a) <\infty$;
				\item[\textnormal{(iii)}] $ \displaystyle\sup_{r\in \orb(b)}\rho(r-a) <\infty$;
				\item[\textnormal{(iv)}] $\displaystyle \sup_{r\in \orb(b)}\rho(a\circ r) <\infty$.
			\end{enumerate}
		In particular, if $ \orb(a)=\orb(b) $, then $ a $ is central if and only if any one of the conditions \textnormal{(i)-(iv)} holds.
	\end{corollary}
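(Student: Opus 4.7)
The plan is to deduce this corollary as a direct consequence of Theorem~\ref{Musing} and Corollary~\ref{MusingPt2}, using only the elementary fact that a bounded subset of $\field$ is a proper subset of $\field\setminus\{0\}$ (or of $\field\setminus\{0,1\}$).

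For the forward direction, assume that every element of $\orb(a)$ commutes with every element of $\orb(b)$. In particular, $a$ commutes with each $r\in\orb(b)$, so the usual spectral containments for commuting elements apply. Combined with the fact that $\sigma(r)=\sigma(b)$ for all $r\in\orb(b)$, one obtains
\begin{align*}
\rho(ar) &\le \rho(a)\rho(b), & \rho(r+a) &\le \rho(a)+\rho(b),\\
\rho(r-a) &\le \rho(a)+\rho(b), & \rho(a\circ r) &\le \rho(a)+\rho(b)+\rho(a)\rho(b),
\end{align*}
uniformly in $r\in\orb(b)$, and all four suprema are finite.

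For the reverse direction, suppose one of (i)--(iv) holds. If, say, $M \coloneqq \sup_{r\in\orb(b)}\rho(ar)<\infty$, then every point of $\bigcup_{r\in\orb(b)}\sigma'(ar)$ has modulus at most $M$, so this union is contained in the closed disc of radius $M$ and therefore misses every $\alpha\in\field\setminus\{0\}$ with $|\alpha|>M$; condition~(\ref{(i)}) of Theorem~\ref{Musing} holds. The arguments for (ii) and (iii) are identical, using the analogous conditions~(\ref{(ii)}) and~(\ref{(iii)}). For (iv), boundedness of $\rho(a\circ r)$ bounds the entire union $\bigcup_{r\in\orb(b)}\sigma'(a\circ r)$, so it remains a proper subset of $\field\setminus\{0,1\}$ even after removing $\{1\}$; hence~(\ref{(iv)}) of Theorem~\ref{Musing} is satisfied. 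In each case Theorem~\ref{Musing} yields that every element of $\orb(a)$ commutes with every element of $\orb(b)$.

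The final sentence follows at once: when $\orb(a)=\orb(b)$, conditions (i)--(iv) become statements about $\orb(a)$ alone, and their equivalence with $a$ being central is exactly the content of Corollary~\ref{MusingPt2} (applied via the same boundedness-implies-proper-subset reasoning). There is no real obstacle here; the work has all been done in the preceding results, and the only thing to check carefully is the bookkeeping for case (iv), where the union must be shown to miss some point of $\field\setminus\{0,1\}$ rather than merely some point of $\field\setminus\{0\}$---and boundedness takes care of this automatically.
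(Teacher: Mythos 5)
Your proposal is correct and matches the paper's intent exactly: the paper states that the corollary is ``immediate from Theorem~\ref{Musing} and Corollary~\ref{MusingPt2}'' and supplies no further argument, and your write-up simply fills in the routine boundedness-implies-proper-subset reasoning that the authors leave implicit. Nothing to add.
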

	The above corollary can be used to strengthen two well-known characterizations of commutative semisimple Banach algebras. It is known (see for instance \cite[Corollary 5.2.3]{a91} or the main theorem in \cite{Zemanek}) that a semisimple Banach algebra $ A $ is commutative if and only if any one of the following conditions hold:
	\begin{enumerate}
		\item[(i)] there exists $ M>0 $ such that $ \rho(a+b)\leq M(\rho(a)+\rho(b)) $ for all $ a,b\in A $;
		\item[(ii)]  there exists $ N>0 $ such that $ \rho(ab)\leq N\rho(a)\rho(b) $ for all $ a,b\in A $.
	\end{enumerate}
	Obviously, if $ A $ is commutative, then we can take $ M=N=1 $. Observe that both $ M $ and $ N $ are independent of all members $ a,b\in A $, so the conditions are required to hold globally on $A$. The following theorem localizes these results to similarity orbits—which, as a reminder, partition $ A $.
	\begin{theorem}\label{classical}
		Let $ A $ be a semisimple Banach algebra. Then the following properties are equivalent:
		\begin{enumerate}
			\item[\textnormal{(i)}] $ A $ is commutative;
			\item[\textnormal{(ii)}] for each similarity orbit $ \orb(a) $ of $ A $, there exist $ M_{\orb(a)}>0 $ and $ a^\prime\in \orb(a) $ such that $ \rho(a^\prime+r)\leq M_{\orb(a)}(\rho(a^\prime)+\rho(r)) $ for all $ r\in \orb(a) $;
			\item[\textnormal{(iii)}] for each similarity orbit $ \orb(a) $ of $ A $, there exist $ N_{\orb(a)}>0 $ and $ a^\prime\in \orb(a) $ such that $ \rho(a^\prime r)\leq N_{\orb(a)}\rho(a^\prime)\rho(r) $ for all $ r\in \orb(a) $.
		\end{enumerate}
	\end{theorem}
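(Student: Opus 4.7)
The plan is to derive this theorem directly from Corollary \ref{artificialList}, with the essential input being that the spectral radius is constant on similarity orbits. The forward implication (i)$\Rightarrow$(ii),(iii) is immediate: if $A$ is commutative then $waw^{-1}=a$ for every $w\in G_\identity$, so each orbit $\orb(a)$ is the singleton $\{a\}$, and both inequalities hold trivially with $M_{\orb(a)}=N_{\orb(a)}=1$ (or indeed with any positive constant, since the inequality needs only hold for $r=a'=a$).

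For the converse, fix an arbitrary $a\in A$ and first assume (ii). Choose $a'\in\orb(a)$ and $M=M_{\orb(a)}$ furnished by the hypothesis. Since every $r\in\orb(a)$ satisfies $\sigma(r)=\sigma(a)$, one has $\rho(r)=\rho(a)=\rho(a')$ for all such $r$. Hence the apparently local estimate $\rho(a'+r)\le M(\rho(a')+\rho(r))$ upgrades to the uniform bound
\[
\sup_{r\in\orb(a)}\rho(a'+r)\;\le\;2M\rho(a)\;<\;\infty.
\]
Because $\orb(a')=\orb(a)$, this is exactly condition (ii) of Corollary \ref{artificialList} applied with both ``$a$'' and ``$b$'' (in the notation of that corollary) taken to be $a'$. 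Invoking the ``In particular'' clause of that corollary forces $a'$ to be central, and hence $\orb(a')=\{a'\}$. Since $a\in\orb(a)=\orb(a')$, we conclude $a=a'$, so $a$ itself is central; as $a$ was arbitrary, $A$ is commutative.

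The implication (iii)$\Rightarrow$(i) is identical in structure: the estimate $\rho(a'r)\le N_{\orb(a)}\rho(a')\rho(r)=N_{\orb(a)}\rho(a)^2$ yields $\sup_{r\in\orb(a)}\rho(a'r)<\infty$, so condition (i) of Corollary \ref{artificialList}, again applied with ``$a$'' and ``$b$'' both equal to $a'$, produces the same conclusion that $a$ is central.

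There is no real obstacle here—the deep work has already been absorbed into Theorem \ref{Musing} and Corollary \ref{artificialList}. The only point worth highlighting is the reduction from the displayed linear or multiplicative inequality, which a priori involves only a single privileged element $a'$ of the orbit, to the full finite-supremum hypothesis of Corollary \ref{artificialList}; this reduction is powered solely by the conjugation invariance of the spectrum, which turns $\rho(r)$ into the constant $\rho(a)$ as $r$ ranges over $\orb(a)$.
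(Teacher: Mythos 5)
Your proof is correct and follows essentially the same route as the paper: both directions hinge on the conjugation-invariance of the spectral radius, which turns the local inequality into the finite-supremum hypothesis of Corollary \ref{artificialList}, after which centrality of $a'$ (and hence $a=a'$) follows immediately. The only cosmetic difference is that you spell out the forward direction (orbits collapse to singletons) and the step $a'=a$ in more detail than the paper does.
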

	\begin{proof}
		If $ A $ is commutative then (ii) and (iii) are trivially true. So suppose that (ii) holds. Then, for each $ r\in\orb(a) $, it follows that
		$$ \rho(a^\prime+r)\leq M_{\orb(a)}(\rho(a^\prime)+\rho(r)) =2M_{\orb(a)} \rho(a),$$
		whence Corollary \ref{artificialList} implies that $ a^\prime $ is central, and so, $ a^\prime=a $. Since $ a\in A $ was arbitrary, it follows that $ A $ is commutative. If (iii) holds then the argument is analogous.
 	\end{proof}
	\section{Localization to Neighbourhoods in Similarity Orbits}\label{sec3}
	If $ K_1 $ and $ K_2 $ are nonempty compact subsets of $ \field $, then the Hausdorff distance between these sets is defined as:
	$$\Delta(K_1,K_2)\coloneqq \max\left\{\sup_{\lambda\in K_1}\left(\inf_{\alpha\in K_2}\abs{\lambda-\alpha}\right),\sup_{\lambda\in K_2}\left(\inf_{\alpha\in K_1}\abs{\lambda-\alpha}\right)\right\}.$$ 
	The main result of this section is Theorem \ref{mainLocalization} and, for the sake of brevity, we state it for only one type of operation, namely, multiplication. That is, the hypothesis and conclusion to the theorem stated below pertain only to the spectra of elements of the form $ ra $ as $ r $ varies through $ \orb(b) $.	The reader will no doubt find after perusing the proof of the theorem that there is really no need to limit the result to products of elements, but we may obtain identical results for each of the remaining combinations of elements: $ r+a $, $ r-a $ and $ r\circ a $. Refer to the remarks at the end of this section to view the tweaks necessary to achieve this.
	\begin{theorem}\label{mainLocalization}
		Let $ A $ be a semisimple Banach algebra. Suppose that $ a,b\in A $ and $ C $ is a fixed countable subset of $ \field $. If there exists a nonempty open subset $ N $ of $ \orb(b) $ and a pair $ (a^\prime,b^\prime)\in \orb(a)\times\orb(b) $ such that $ \sigma(ra)\subseteq C $ for all $r\in N $ and $ \#\sigma( b^\prime a^\prime)<\infty $, then every member of $ \orb(a) $ commutes with every member of $ \orb (b)$. Moreover, we have that $ \sigma(ra)=\sigma(ba) $ for all $ r\in\orb(b) $.
	\end{theorem}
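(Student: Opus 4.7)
The plan is to reduce to Theorem \ref{Musing}(\ref{(i)}) by showing that $\bigcup_{r\in\orb(b)}\sigma^\prime(ra)$ is contained in a single finite set, whence it cannot equal $\field\setminus\{0\}$. A first Jacobson-type move converts the hypothesis $\#\sigma(b^\prime a^\prime)<\infty$ into the cleaner form $\#\sigma(b_0a)<\infty$ for some $b_0\in\orb(b)$: writing $a^\prime=xax^{-1}$ with $x\in G_\identity$, two applications of Jacobson's Lemma give
$$\sigma^\prime(b^\prime a^\prime)=\sigma^\prime(a^\prime b^\prime)=\sigma^\prime(xax^{-1}b^\prime)=\sigma^\prime\!\left(a(x^{-1}b^\prime x)\right),$$
so $b_0\coloneqq x^{-1}b^\prime x$ works.

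The second move is an entire-function parametrization of $\orb(b)$ carried out twice. Fix $r^\ast\in N$; since $r^\ast$ and $b_0$ are conjugate by a product of exponentials from $G_\identity$, one has an entire function
$$\phi_1(\lambda)=e^{\lambda y_1}\cdots e^{\lambda y_m}\,b_0\,e^{-\lambda y_m}\cdots e^{-\lambda y_1}$$
with $\phi_1(0)=b_0$ and $\phi_1(1)=r^\ast$. Setting $g_1(\lambda)\coloneqq\phi_1(\lambda)a$, the map $g_1$ is entire with $\sigma(g_1(0))=\sigma(b_0a)$ finite, while $\sigma(g_1(\lambda))\subseteq C$ on an open neighborhood of $\lambda=1$ by openness of $N$ and continuity of $\phi_1$. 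The decisive step is then to invoke Aupetit's analytic-multifunction machinery: finitely many analytic (or algebroid) spectral branches emanate from $\sigma(g_1(0))$ and continue across $\field$; on the open set where $\sigma(g_1(\lambda))\subseteq C$, each such branch lands in the countable set $C$, and since a nonconstant holomorphic function has open image, each branch must be locally (and hence globally) constant. Together with the scarcity principle—which forbids $\#\sigma(g_1(\lambda))$ from exceeding the value attained on a non-thin set—this forces $\sigma(g_1(\lambda))=\sigma(b_0a)$ for every $\lambda\in\field$; in particular $\sigma(r^\ast a)=\sigma(b_0a)$. For an arbitrary target $r\in\orb(b)$ one now repeats the construction with $r^\ast$ and $r$: an entire $\phi_2$ with $\phi_2(0)=r^\ast$ and $\phi_2(1)=r$ yields $g_2(\lambda)=\phi_2(\lambda)a$ entire, $\sigma(g_2(0))=\sigma(r^\ast a)$ finite, and $\sigma(g_2(\lambda))\subseteq C$ on an open neighborhood of $0$ (since $r^\ast\in N$ is open). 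A second application of the same scarcity argument gives $\sigma(g_2(\lambda))=\sigma(r^\ast a)=\sigma(b_0a)$ for all $\lambda$, in particular $\sigma(ra)=\sigma(b_0a)$.

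Since $r\in\orb(b)$ was arbitrary and $b\in\orb(b)$, we obtain $\sigma(ra)=\sigma(ba)=\sigma(b_0a)$ for every $r\in\orb(b)$, which is the ``moreover'' assertion. The union $\bigcup_{r\in\orb(b)}\sigma^\prime(ra)$ is then the finite set $\sigma^\prime(b_0a)$, and Theorem \ref{Musing}(\ref{(i)}) delivers commutativity of $\orb(a)$ and $\orb(b)$. I expect the main obstacle to lie in the scarcity step: the standard formulations of the Scarcity Theorem are phrased in terms of a uniform bound on $\#\sigma$, whereas here the hypothesis only gives a countable range on an open set. The argument must therefore combine the single-point finiteness at the base of the entire path with the countability constraint farther along to propagate both finiteness and constancy of the spectrum throughout $\field$, ensuring that no ``extra'' spectral branches appear beyond those emanating from $\sigma(b_0a)$.
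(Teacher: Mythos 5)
Your overall plan---reduce to Theorem~\ref{Musing}(\ref{(i)}) by showing the union $\bigcup_{r\in\orb(b)}\sigma^\prime(ra)$ is a fixed finite set, using a Jacobson-type cleanup of the finiteness hypothesis and an entire-function parametrization of $\orb(b)$---is precisely the strategy of the paper, and you correctly anticipate where the difficulty lies. But the step you flag as the ``decisive'' one is in fact a genuine gap, not merely a technicality.

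The assertion that ``finitely many analytic (or algebroid) spectral branches emanate from $\sigma(g_1(0))$ and continue across~$\field$'' is not available a priori. Aupetit's Scarcity Theorem gives a dichotomy for an analytic multifunction $\lambda\mapsto\sigma(g_1(\lambda))$: either the set of $\lambda$ with finite spectrum is polar (capacity zero), or there is a uniform finite bound $n$ with locally holomorphic spectral points outside a polar set. To land in the second alternative you must exhibit a \emph{non-polar} set of $\lambda$ on which $\sigma(g_1(\lambda))$ is finite. You only have finiteness at the single point $\lambda=0$, and a singleton is polar; the countability constraint $\sigma(g_1(\lambda))\subseteq C$ near $\lambda=1$ does not upgrade to finiteness. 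So the ``branches'' you wish to continue need not exist away from $\lambda=0$, and the open-mapping argument (a nonconstant holomorphic branch taking values in a countable set must be constant) cannot even be stated without them. The same objection applies verbatim to the second parametrization $g_2$: there you again have finiteness at one point and countability on a neighborhood of that point, which is still insufficient to invoke Scarcity.

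The paper bridges exactly this gap with a separate lemma (Lemma~\ref{TwoProofs}) that never mentions finiteness. It shows directly that the set-valued map $v\mapsto\sigma(vbv^{-1}a)$ is \emph{constant} on an open ball $B\subseteq G_\identity$ by a purely topological argument: the image under this map is a connected subset of the compact-sets-with-Hausdorff-metric space, and if it were non-degenerate one could, for each radius $\delta$ in an uncountable interval, find witnesses realizing that Hausdorff distance at points of the countable set $C\times C$---forcing a pigeonhole contradiction. Only after this constancy is in hand does the paper combine \cite[Theorems~7.2.8 and~7.2.13]{a91} with the finiteness hypothesis (via Jacobson's Lemma) and then the Scarcity Theorem: constancy on $B$ produces a non-polar set of $\lambda$ where $\sigma(g(\lambda))=M$, Theorem~7.2.13 forces $M\subseteq\sigma(g(\lambda))$ for every $\lambda$, finiteness of one $\sigma(b^\prime a^\prime)$ makes $M$ finite, and Scarcity caps the cardinality by $\#M$ everywhere. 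To repair your argument you would need to prove something equivalent to Lemma~\ref{TwoProofs} (or, alternatively, argue that for $\lambda$ in the open set where $\sigma(g_1(\lambda))\subseteq C$, the ``accidental'' spectral values $\alpha\notin\sigma(g_1(0))$ each occur only on a discrete set of $\lambda$ by Theorem~7.2.13, and then use a cardinality count to find a non-polar set on which the spectrum equals the fixed finite set $P$ of persistent values); as written, the branch-continuation step does not go through.
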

	The following two lemmas are a requisite for proving Theorem \ref{mainLocalization}, and the hypothesis thereof will be assumed in the premise of each of these lemmas.
	\begin{lemma}\label{countable}
		$ \sigma(ra) $ is countable for all $ r\in \orb(b) $.
	\end{lemma}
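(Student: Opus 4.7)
My approach reduces the global claim to the local hypothesis by an analytic-deformation argument in $\orb(b)$, followed by a spectral-scarcity step. Fix an arbitrary $r\in\orb(b)$ and a reference point $r_0\in N$. Since $r$ and $r_0$ lie in the same similarity orbit, I can write $r=ur_0u^{-1}$ for some $u\in G_\identity(A)$, and the standard fact that $G_\identity$ is generated by exponentials gives a factorization $u=e^{x_1}\cdots e^{x_n}$ with $x_1,\dots,x_n\in A$. I then introduce the entire curve
\begin{equation*}
f(\lambda)=e^{\lambda x_1}\cdots e^{\lambda x_n}\,r_0\,e^{-\lambda x_n}\cdots e^{-\lambda x_1},
\end{equation*}
which lies in $\orb(b)$ throughout and satisfies $f(0)=r_0$ and $f(1)=r$. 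Setting $F(\lambda)=f(\lambda)a$ yields an entire $A$-valued function with $F(1)=ra$.

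Next, I exploit continuity of $f$ together with openness of $N$ in $\orb(b)$ to obtain an open disk $U$ about $0$ in $\field$ on which $f(\lambda)\in N$. The hypothesis on $N$ then gives $\sigma(F(\lambda))\subseteq C$ for every $\lambda\in U$, so in particular $\sigma(F(\lambda))$ is at most countable on the nonempty open set $U$.

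To complete the proof I need to pass from countability of the spectrum on $U$ to countability on all of $\field$. For this I propose to invoke a countable-spectrum version of Aupetit's scarcity theorem: if an analytic $A$-valued function on a domain has at-most-countable spectrum on a non-polar subset (in particular on a nonempty open subset), then the spectrum is at most countable everywhere on the domain. Applying this to $F$ with non-polar set $U$ and evaluating at $\lambda=1$ yields countability of $\sigma(ra)$, as required. I expect this scarcity step to be the main obstacle, since it is more delicate than the classical Aupetit result for spectra of bounded finite cardinality. The most natural route is via subharmonicity of $\lambda\mapsto\log\rho\bigl((\mu\identity-F(\lambda))^{-1}\bigr)$ for $\mu$ in a countable dense subset of $\field\setminus C$, combined with a Baire-category analysis of the closed sets $\{\lambda\in\field\colon\mu\in\sigma(F(\lambda))\}$; the remaining steps---exponential factorization in $G_\identity$, entirety of $f$, and localizing the open-set hypothesis to a disk in $\field$---are routine.
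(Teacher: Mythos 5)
Your proposal is essentially the same as the paper's proof: deform from a reference point of $N$ to an arbitrary element of $\orb(b)$ along an entire path in the similarity orbit, observe that the spectrum lies in $C$ (hence is countable) on a nonempty open disk of parameters, and invoke a countable-spectrum version of the scarcity theorem for the analytic multifunction $\lambda\mapsto\sigma(F(\lambda))$. The only cosmetic difference is the choice of entire curve: the paper interpolates the exponents, taking $f(\lambda)=e^{(1-\lambda)x_1+\lambda y_1}\cdots e^{(1-\lambda)x_n+\lambda y_n}$ so that $f(0)=v_0$ and $f(1)=y$, whereas you conjugate $r_0$ by $e^{\lambda x_1}\cdots e^{\lambda x_n}$; both produce entire curves in $\orb(b)$ and both localize the hypothesis to a disk about $0$ by continuity.

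The one step you flag as "the main obstacle" is in fact not one: the precise result you need — an analytic multifunction whose values are at most countable on a set of positive capacity (in particular a nonempty open set) has at most countable values everywhere — is exactly Aupetit's Theorem 7.2.8 in \emph{A Primer on Spectral Theory}, which is what the paper cites, together with Theorem 7.1.13 to guarantee that $\lambda\mapsto\sigma(F(\lambda))$ is indeed an analytic multifunction. So you should cite those results rather than reprove them. Your sketched alternative route (subharmonicity of $\log\rho\bigl((\mu\identity-F(\lambda))^{-1}\bigr)$ over a countable dense $\mu$-set plus a Baire-category argument) is not Aupetit's actual proof and, as written, is too vague to be checked — Baire category alone does not obviously control the capacity-theoretic behaviour that drives the scarcity theorem. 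Since the theorem is available off the shelf, that speculative paragraph should simply be replaced by the citation.
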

	\begin{proof}
		Because the map $ v\mapsto vbv^{-1}$ is continuous on $ G_\identity $, it follows that there exists an open ball $ B=B_A(v_0,\epsilon) $ in $ G_\identity $ such that $ v\in B $ implies $ vbv^{-1} \in N $ and hence $ \sigma(vbv^{-1}a)\subseteq C $ for all $ v\in B $. Suppose that $ v_0=e^{x_1}\cdots e^{x_n} $. Now take $ y=e^{y_1}\cdots e^{y_m}\in G_\identity $ to be arbitrary. By appropriately using $e^0=\identity$, we may assume without loss of generality that $m=n$. Next we define the entire function $ f\colon \field\to G_\identity $ to be
		$$f(\lambda)\coloneqq e^{(1-\lambda)x_1+\lambda y_1}\cdots e^{(1-\lambda)x_n+\lambda y_n}.$$
		Then if $ g(\lambda)=f(\lambda)b\left[f(\lambda)\right]^{-1}a $, we have via \cite[Theorem 7.1.13]{a91} that $ \lambda\mapsto \sigma(g(\lambda)) $ is an analytic multifunction on $ \field $. Furthermore, the continuity of $ f $ says that there exists $ \delta>0 $ such that $ \abs{\lambda}<\delta $ implies $ f(\lambda)\in B $, whence $ \sigma(g(\lambda)) \subseteq C$ whenever $ \abs{\lambda}<\delta $. But then the set of $ \lambda\in\field $ such that $ \sigma(g(\lambda)) $ is at most countable has nonzero capacity—in which case, \cite[Theorem 7.2.8]{a91} forces $ \sigma(g(\lambda)) $ to be at most countable for all $ \lambda\in \field $. In particular then, $ \sigma(g(1)) =\sigma(yby^{-1}a)$ is at most countable. Because $ y\in G_\identity $ was arbitrary, we now have the result.
	\end{proof}
	\begin{lemma}\label{TwoProofs}
		There exists an open ball $ B $ in $ G_\identity $ such that $ \sigma(vbv^{-1}a) $ is constant for all $ v\in B $.
	\end{lemma}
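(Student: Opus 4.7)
The plan is to combine analytic-multifunction theory with a Baire category argument and a polynomial-spectrum trick. Although $\sigma(vbv^{-1}a)$ may vary as $v$ moves through $G_\identity$, the countability from Lemma~\ref{countable} together with the existence of a finite-spectrum point should allow one to pin down a finite common ``persistent'' set $T\subseteq\field$ that turns out to exhaust the spectrum on an open ball.

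First I would establish the following dichotomy: for each $\mu\in\field$, the set $\Omega_\mu:=\{v\in G_\identity:\mu\in\sigma(vbv^{-1}a)\}$ is either all of $G_\identity$ or has empty interior. Along any entire analytic path $\lambda\mapsto v(\lambda)\in G_\identity$ of the type used in the proof of Lemma~\ref{countable}, the map $\lambda\mapsto \log\dist(\mu,\sigma(v(\lambda)bv(\lambda)^{-1}a))$ is subharmonic on $\field$ (Slodkowski), so its $-\infty$ set is either the whole plane or polar. Since $G_\identity$ is connected via such paths, an open piece of $\Omega_\mu$ forces $\Omega_\mu=G_\identity$.

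Second, by Baire I would identify a nonempty finite persistent set. Let $B_0$ be the ball from the proof of Lemma~\ref{countable} on which $\sigma(vbv^{-1}a)\subseteq C$. The decomposition $B_0=\bigcup_{\mu\in C}(\Omega_\mu\cap B_0)$ is a countable union of closed sets, so some $\Omega_{\mu_0}$ has nonempty interior in $B_0$, and by the dichotomy $\Omega_{\mu_0}=G_\identity$. Setting $T:=\{\mu\in C:\Omega_\mu=G_\identity\}$ gives a nonempty set with $T\subseteq \bigcap_{v\in G_\identity}\sigma(vbv^{-1}a)\subseteq\sigma(b'a')$ (after rewriting $b'a'$ in the form $v_1bv_1^{-1}a$ via Jacobson's lemma and conjugation), so $T$ is finite. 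Each $\Omega_\mu$ with $\mu\in C\setminus T$ is closed and nowhere dense, so their countable union is meager in $B_0$; choosing $v_*\in B_0$ outside this union gives $\sigma(v_*)=T$.

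Third I would upgrade to constancy on an open ball around $v_*$ via the polynomial $q(z):=\prod_{\mu\in T}(z-\mu)$. For each $y\in A$, let $D_y:=\{\lambda\in\field:\exp(\lambda y)v_*\in B_0\}$ (open, containing $0$) and $h_y(\lambda):=\exp(\lambda y)v_*bv_*^{-1}\exp(-\lambda y)a$. For $\mu\in C\setminus T$, $\{\lambda\in D_y:\mu\in\sigma(h_y(\lambda))\}$ is polar in $D_y$ (subharmonicity of $\log\dist$, together with the fact that at $\lambda=0$ we have $\mu\notin\sigma(v_*)=T$); a countable union over $\mu\in C\setminus T$ remains polar, so $\{\lambda\in D_y:\sigma(h_y(\lambda))=T\}$ is non-polar. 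Since $q$ vanishes on $T$, $\rho(q(h_y(\lambda)))=0$ on this non-polar set, and by Vesentini's subharmonicity of $\log\rho$ combined with the identity principle, $\rho(q(h_y(\lambda)))\equiv 0$ on the connected component of $D_y$ containing $0$. This forces $\sigma(h_y(\lambda))\subseteq T$ throughout, and hence equality by the reverse inclusion. A uniform choice of radius in $y$ then produces an open ball around $v_*$ in $G_\identity$ on which $\sigma(vbv^{-1}a)=T$. The main obstacle is justifying the subharmonicity statements (Slodkowski for the dichotomy and Vesentini for the polynomial trick) and handling the polar-set bookkeeping cleanly on one-dimensional analytic slices.
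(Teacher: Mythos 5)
Your proof is correct but takes a genuinely different route from the paper's. The paper argues purely topologically in the metric space $\mathcal{K}$ of nonempty compact subsets of $\field$ under the Hausdorff metric: the map $v\mapsto\sigma(vbv^{-1}a)$ is continuous on $B$ (by Newburgh's theorem, since the spectra lie in the countable, hence totally disconnected, set $C$), so its image is connected; if that image were not a singleton one could, for every $\delta$ in the uncountable interval $(0,\delta_0]$, produce a pair $(\alpha_\delta,\beta_\delta)\in C\times C$ realizing Hausdorff distance exactly $\delta$, and the pigeonhole principle against the countable set $C\times C$ gives a contradiction. You instead import the analytic-multifunction and subharmonicity machinery — the one-point dichotomy for $\{v:\mu\in\sigma(vbv^{-1}a)\}$ (this is essentially Aupetit's Theorem 7.2.13, which the paper itself invokes later), a Baire category argument to extract the persistent set $T$, and the polynomial/Vesentini trick — tools the paper postpones to the proof of Theorem~\ref{mainLocalization}. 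Two consequences of this choice are worth flagging. First, your route consumes the finiteness hypothesis $\#\sigma(b'a')<\infty$ (you need it to form the polynomial $q$), whereas the paper's proof of this lemma uses only the countability of $C$; the finiteness is available under the standing hypotheses, so this is legitimate, but it is a genuinely stronger assumption than the paper needs at this stage. Second, your Vesentini step actually over-delivers: $h_y$ is entire, so the subharmonic function $\log\rho(q(h_y(\cdot)))$ is defined on all of $\field$, and a subharmonic function on $\field$ that equals $-\infty$ on a non-polar set is identically $-\infty$; hence $\sigma(h_y(\lambda))=T$ for every $\lambda\in\field$ and every $y\in A$, the ``uniform radius'' bookkeeping is unnecessary, and you have in effect carried out most of the proof of Theorem~\ref{mainLocalization} inside the lemma. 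The paper's approach buys a cleaner division of labor and a weaker hypothesis here; yours makes the subsequent theorem nearly immediate.
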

	\begin{proof}
		Denote by $ \mathcal{K} $ the metric space consisting of all nonempty compact subsets of $ \field $ under the Hausdorff metric. Let $ T\colon G_\identity\to\mathcal{K} $ be the map $ v\mapsto\sigma(vbv^{-1}a) $. The open ball $ B $ will be the same one which we obtained in Lemma \ref{countable}, that is, $ B=B_A(v_0,\epsilon) $. By continuity of $ T $ and the connectedness of $ B $, it follows that $ T(B) $ is connected in $ \mathcal{K} $. To complete the proof, we need only show that $ T(B) $ is a singleton. To this end, we will assume to the contrary that there is some $ v\in B $ such that $ T(v)\neq T(v_0) $. Now there is an open ball $ D_0= B_{T(B)}(T(v_0),\delta_0) $ in $ T(B) $ to which $ T(v) $ does not belong. Moreover, if there does not exist any $ T(y) $ on the boundary $ \partial D_0 $ of the open ball then that would mean that $ T(B) $ is the union of two nonempty disjoint open sets, that is, $$ T(B)=D_0\cup (T(B)\setminus\overline{D_0}), $$ which is obviously a contradiction. So let $ T(y_0) \in \partial D_0 $. Therefore, we can use the same reasoning to guarantee that for each real number $ \delta\in \left(0,\delta_0\right]$, there is some $ y_\delta\in B $ such that $ T(y_\delta)\in \partial D_\delta $, where $ D_\delta=B_{T(B)}(T(v_0),\delta) $ here. Recall that by the hypothesis, we have that $ T(v)\subseteq C $ for all $ v\in B $, where $ C $ is a fixed countable subset of $ \field $. Because the spectrum is nonempty and compact, we have that if $ T(y_\delta) \in \partial D_\delta$ then the distance of $ T(y_\delta) $ to $ T(v_0) $ is assumed at some complex numbers belonging to $ C $. Therefore, to each $ \delta\in(0,\delta_0] $ we can associated a pair $ (\alpha_\delta,\beta_\delta) \in T(y_\delta)\times T(v_0)$ such that $ \abs{\alpha_\delta-\beta_\delta}=\delta $. Clearly, $ C\times C $ is countable whilst the interval $ (0,\delta_0] $ is uncountable, and so the preceding statement leads to the following pigeonhole argument: There must be distinct real numbers, say $ \delta $ and $ \gamma $ in $ (0,\delta_0] $ for which the corresponding pairs $ (\alpha_\delta,\beta_\delta), (\alpha_\gamma,\beta_\gamma) \in T(y_\delta)\times T(v_0)$ are the same. That is, $ \alpha_\delta=\alpha_\gamma $ and $ \beta_\delta=\beta_\gamma $ in the above, in addition to the fact that
		$$\abs{\alpha_\delta-\beta_\delta}=\delta, \quad \abs{\alpha_\gamma-\beta_\gamma}=\gamma.$$
		Notice then that the contradiction $ \delta=\gamma $ is reached.
	\end{proof}

 	\begin{proof}[Proof of Theorem \ref{mainLocalization}]
 		In Lemma \ref{TwoProofs} we established that $ \sigma(vbv^{-1}a) $ is constant for all $ v\in B $, where $ B=B_A(v_0,\epsilon) $ is an open ball in $ G_\identity $. Let us assume that, for each $ v\in B $, $$ \sigma(vbv^{-1}a)=\left\{\alpha_n\right\}_{n\in\mathbb{N}}=M, $$ where $ M\subseteq C $. Suppose that $ y\in G_\identity $ is arbitrary, and define the functions $ f $ and $ g $ as in Lemma \ref{countable}, with $ v_0=e^{x_1}\cdots e^{x_n} $ and $ y=e^{y_1}\cdots e^{y_n} $. Then the map $ \lambda\mapsto \sigma(g(\lambda)) $ is an analytic multifunction; with Lemma \ref{countable} we have that $  \sigma(g(\lambda)) $ is countable for all $ \lambda\in \field $ and by our preceding remarks, there exists $ \delta>0 $ such that $ \abs{\lambda} <\delta$ implies $ \sigma(g(\lambda)) =M $. Now for each $ n\in\mathbb{N} $ we define the set $$Z_n\coloneqq\left\{\lambda\in\field\colon \alpha_n\in  \sigma(g(\lambda)) \right\},$$
 		and we observe that $ B_\field(0,\delta)\subseteq Z_n $ for all $ n\in\mathbb{N} $—in which case, \cite[Theorem 7.2.13]{a91} says that $ Z_n=\field $ for each $ n\in\mathbb{N} $. In particular then, $ \alpha_n\in  \sigma(g(1))  =\sigma(yby^{-1}a)$ for all $ n\in\mathbb{N} $. As $ y $ was arbitrarily picked in $ G_\identity $ we can conclude that $ M\subseteq \sigma(yby^{-1}a) $ for all $ y\in G_\identity $. \\ 
 		By the hypothesis and Jacobson's Lemma, it follows that there is some $ y_0\in G_\identity $ such that $ \sigma(y_0by_0^{-1}a) $ is finite, and consequently, $ M $ is finite. But recall we had that $ \sigma(vbv^{-1}a)=M $ for all $ v\in B $. So we may fix an arbitrary $ y\in G_\identity $ as before and define the corresponding analytic function $ g $ according to Lemma \ref{countable} to obtain existence of a $ \delta>0 $ (via continuity of $ g $) such that $ \abs{\lambda}<\delta $ implies that $\# \sigma(g(\lambda)) <\infty$. Hence the Scarcity Theorem (refer to {\cite[Theorem 3.4.25]{a91}}) now says that $ \#\sigma(g(\lambda)) \leq \#M$ for all $ \lambda\in\field $. But by the first part, we have that $ \sigma(g(\lambda)) $ contains $ M $ for all $ \lambda\in\field $ and, in particular, $M\subseteq \sigma(g(1)) $, so it follows that $ \sigma(g(1))=M $. But $ y $ was arbitrary in $ G_\identity $ so we conclude that $ \sigma(yby^{-1}a)=M $ for all $ y\in G_\identity $. Finally, a simple application of Theorem \ref{Musing} now gives the result.
 	\end{proof}
 	
	\begin{remarks} 
		We conclude this section with the following observations:
		\begin{enumerate}
			\item[(i)] Notice that the condition in Theorem \ref{mainLocalization} assuming that there exists a pair $ (a^\prime,b^\prime)\in\orb(a)\times\orb(b) $ such that $ \#\sigma( b^\prime a^\prime)<\infty $ implies the following weaker statement:
			$$\#\left[\bigcap_{r\in\orb(b)}\sigma(ra)\right]<\infty.$$
			Furthermore, the above statement is sufficient to deduce the last part in the proof of Theorem \ref{mainLocalization}. 
			\item[(ii)] If we go to the proof of Lemma \ref{countable} and replace $ g(\lambda) $ by one of the following entire functions:
			$$ \lambda\mapsto f(\lambda) b \left[f(\lambda)\right]^{-1}+a, \quad  \lambda\mapsto f(\lambda) b \left[f(\lambda)\right]^{-1}-a, \quad  \lambda\mapsto f(\lambda) b \left[f(\lambda)\right]^{-1}\circ a, $$
			then we obtain Theorem \ref{mainLocalization} (in exactly the same manner) for the forms $ r+a $, $ r-a $, $ r\circ a $ respectively.
		\end{enumerate}
	\end{remarks}

	\section{Applications to Algebraic Elements}\label{sec4}
	It will be interesting to consider the results highlighted in the preceding section for elements in $ A $ with more properties—specifically—algebraic elements. For this, it will be agreed upon that, for the remainder of this section, $ p $ will be a polynomial of the form
	$$p(\lambda)\coloneqq\prod_{i=1}^{n}\left(\lambda-\lambda_i\right),$$
	where $ K=\left\{\lambda_1,\ldots,\lambda_n\right\}$ is a fixed set of distinct complex numbers. The set of elements of $ A $ which are algebraic under $ p $ will be denoted by $ E(p) $. If $ K=\{0,1\} $, then the elements of $ E(p) $ are idempotents. If $ a\in E(p) $, then one can show that $ \sigma(a)\subseteq K $. Something quite useful is the next result which decomposes algebraic elements into linear combinations of idempotents:
	\begin{theorem}[{\cite[Proposition 1]{makaiZemanek16}}]\label{decomposition}
		Let $ A $ be a Banach algebra and suppose that $ a\in A $. Then $ a\in E(p)$ if and only if there exists a set of idempotents $ \left\{e_1,\ldots,e_n\right\} $ such that $ e_ie_j=0 $ for $ i\neq j $, $ \sum_{i=1}^{n}e_i=\identity $, and 
		$$a=\sum_{i=1}^n \lambda_ie_i.$$
	\end{theorem}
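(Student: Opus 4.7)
The plan is to prove both directions by an explicit construction on one side and a direct calculation on the other, exploiting the fact that the $\lambda_i$ are distinct. The construction uses Lagrange interpolation applied to the element $a$.

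For the forward implication, assume $p(a)=0$ and set
$$e_i \coloneqq \prod_{j\neq i}\frac{a-\lambda_j\identity}{\lambda_i-\lambda_j} \qquad (i=1,\ldots,n).$$
These are well defined because the $\lambda_i$ are distinct, and they are polynomials in $a$, hence commute with one another. The first key observation is that for $i\neq j$, the product $e_ie_j$ already contains all the factors $(a-\lambda_1\identity),\ldots,(a-\lambda_n\identity)$ (possibly with a repetition) and therefore has $p(a)=0$ as a factor; so $e_ie_j=0$. Next, the Lagrange interpolants $\ell_i(\lambda)=\prod_{j\neq i}\frac{\lambda-\lambda_j}{\lambda_i-\lambda_j}$ are polynomials of degree $n-1$ satisfying $\ell_i(\lambda_k)=\delta_{ik}$, which forces the polynomial identities $\sum_{i=1}^n\ell_i(\lambda)=1$ and $\sum_{i=1}^n\lambda_i\ell_i(\lambda)=\lambda$ (both sides agreeing at the $n$ distinct points $\lambda_1,\ldots,\lambda_n$ and being of degree at most $n-1$, respectively $n$ in the second case where we note the leading term works out). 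Substituting $a$ for $\lambda$ yields $\sum_{i=1}^n e_i=\identity$ and $a=\sum_{i=1}^n\lambda_ie_i$. Finally, multiplying $\sum_j e_j=\identity$ on the left by $e_i$ and using the orthogonality just established gives $e_i^2=e_i$, confirming each $e_i$ is an idempotent.

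For the converse, suppose $a=\sum_{i=1}^n\lambda_ie_i$ with pairwise orthogonal idempotents summing to $\identity$. The crucial computational remark is that, because $e_je_k=\delta_{jk}e_j$, any polynomial in such an element acts diagonally on the decomposition: for each $i$,
$$a-\lambda_i\identity = \sum_{j=1}^n\lambda_je_j-\lambda_i\sum_{j=1}^ne_j = \sum_{j=1}^n(\lambda_j-\lambda_i)e_j.$$
Iterating the orthogonality relation, the product collapses to
$$p(a)=\prod_{i=1}^n(a-\lambda_i\identity)=\sum_{j=1}^n\left(\prod_{i=1}^n(\lambda_j-\lambda_i)\right)e_j.$$
Each coefficient contains the factor $(\lambda_j-\lambda_j)=0$, so $p(a)=0$ and $a\in E(p)$.

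There is no serious obstacle here: the argument is a standard consequence of Lagrange interpolation and the orthogonality of the $e_i$. The only point requiring a small degree of care is verifying the interpolation identity $\sum_i\lambda_i\ell_i(\lambda)=\lambda$ (one can equally well deduce $a=\sum_i\lambda_ie_i$ by multiplying through by $a$ the relation $\sum_i e_i=\identity$ and simplifying, but the interpolation argument is cleanest). Nothing in the proof requires semisimplicity or even that $A$ be Banach; it is purely algebraic, and valid in any unital associative algebra over $\field$.
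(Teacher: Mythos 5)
Your proof is correct, and it is worth noting that the paper does not prove this result itself: it is quoted directly from Makai--Zem\'anek \cite{makaiZemanek16}, with only a passing remark afterwards that the idempotents $e_i$ are the Riesz idempotents produced by the holomorphic functional calculus. Your argument reconstructs essentially the same decomposition by purely algebraic means: the Lagrange interpolants $\ell_i$ satisfy $\ell_i\equiv\chi_{\{\lambda_i\}}$ on $\sigma(a)\subseteq K$, so the idempotents $e_i=\ell_i(a)$ you build coincide with the Riesz idempotents $\frac{1}{2\pi i}\oint_{\Gamma_i}(\zeta\identity-a)^{-1}\,d\zeta$ that the functional-calculus approach would produce; the two routes therefore construct literally the same elements, yours by division-with-remainder arguments and theirs by contour integration. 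The algebraic route has the small advantage you point out of being valid in any unital associative algebra over a field containing the $\lambda_i$ (no completeness, no norm needed), whereas the Riesz-idempotent route presupposes a Banach-algebra setting.

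Two minor remarks on the write-up. First, your parenthetical about degrees in the second interpolation identity is slightly garbled: $\sum_i\lambda_i\ell_i(\lambda)$ has degree at most $n-1$ (not $n$), and for $n\geq 2$ that is already enough for the comparison with $\lambda$; for $n=1$ the polynomial identity $\ell_1(\lambda)\lambda_1=\lambda$ is actually false as polynomials, but that case is degenerate ($a=\lambda_1\identity$) and the element-level conclusion still holds since one is working modulo $p$. In practice your alternative derivation---noting that $(a-\lambda_i\identity)e_i=p(a)/\prod_{j\neq i}(\lambda_i-\lambda_j)=0$, hence $ae_i=\lambda_ie_i$, and then summing---is cleaner and avoids the degree bookkeeping entirely. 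Second, the step deducing $e_i^2=e_i$ from $\sum_j e_j=\identity$ and pairwise orthogonality is exactly right and is the standard way to avoid computing $\ell_i^2\bmod p$ directly.
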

	In fact, these idempotents are the Riesz idempotents obtained via the Holomorphic Functional Calculus. Hence if  $ a,b\in E(p) $ commute and their decompositions are respectively given by:
	$$a=\sum_{i=1}^n \lambda_ie_i, \quad b=\sum_{i=1}^n \lambda_i\tilde{e}_i,$$
	then $ e_i\tilde{e}_j= \tilde{e}_je_i$ for all $ i,j\in\{1,\ldots,n\} $. Of course, $ E(p) $ is not necessarily connected; in fact if $ a\in E(p) $, then the connected component of $ E(p) $ containing $ a $ is $ \orb(a) $. This was established for idempotents in \cite{z79} and for algebraic elements in \cite{makaiZemanek16}. Before we directly apply the theory obtained in Section \ref{sec1}, we establish that the product of two commuting algebraic elements is once again algebraic, via the following proposition:
	\begin{proposition}\label{myProposition}
		Let $ A $ be a semisimple Banach algebra and suppose that $ a,b\in E(p) $. If every element of $ \orb(a) $ commutes with every element of $ \orb(b) $ then $ \orb(a)\orb(b)=\orb(b)\orb(a)$ consists of algebraic elements.
	\end{proposition}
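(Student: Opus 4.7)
The plan is to apply Theorem~\ref{decomposition} to both $a'$ and $b'$ for an arbitrary pair $(a',b') \in \orb(a) \times \orb(b)$, combine their Riesz idempotents multiplicatively, and then re-apply Theorem~\ref{decomposition} in the reverse direction to certify $a'b'$ as algebraic. The equality $\orb(a)\orb(b) = \orb(b)\orb(a)$ is immediate from the commutation hypothesis, so the real content is algebraicity.

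First I would note that $\orb(a), \orb(b) \subseteq E(p)$, since $p(waw^{-1}) = w\, p(a)\, w^{-1} = 0$ for any $w \in G_\identity$. Fix $(a',b') \in \orb(a) \times \orb(b)$ and decompose, by Theorem~\ref{decomposition},
$$ a' = \sum_{i=1}^n \lambda_i e_i, \quad b' = \sum_{j=1}^n \lambda_j \tilde{e}_j, $$
with each family pairwise orthogonal and summing to $\identity$. Since $a'$ and $b'$ commute and the $e_i, \tilde{e}_j$ are the corresponding Riesz idempotents, the remark recalled just after Theorem~\ref{decomposition} gives $e_i \tilde{e}_j = \tilde{e}_j e_i$ for all $i,j$. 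Setting $f_{ij} := e_i \tilde{e}_j$, pairwise commutation makes each $f_{ij}$ an idempotent, the family is pairwise orthogonal, and it sums to $\left(\sum_i e_i\right)\left(\sum_j \tilde{e}_j\right) = \identity$. Expanding the product yields
$$ a'b' = \sum_{i,j} \lambda_i \lambda_j f_{ij}. $$

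To finish, I would consolidate by distinct values: let $\mu_1, \ldots, \mu_m$ enumerate the distinct elements of $\{\lambda_i \lambda_j : 1 \leq i,j \leq n\}$, and set $F_k := \sum_{\lambda_i \lambda_j = \mu_k} f_{ij}$. Each $F_k$ is an idempotent (being a sum of pairwise orthogonal idempotents), the family $\{F_k\}_{k=1}^m$ is pairwise orthogonal and sums to $\identity$, and $a'b' = \sum_{k=1}^m \mu_k F_k$. Theorem~\ref{decomposition} then gives $a'b' \in E(q)$ with $q(\lambda) := \prod_{k=1}^m (\lambda - \mu_k)$, a polynomial independent of the chosen pair. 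The only subtlety, rather than an obstacle, is this consolidation step: Theorem~\ref{decomposition} expects distinct scalars in front of the orthogonal idempotents, so one cannot apply it directly to the expansion $\sum_{i,j} \lambda_i \lambda_j f_{ij}$ without first grouping equal products.
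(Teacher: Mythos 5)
Your proof is correct and takes essentially the same approach as the paper's: decompose $a'$ and $b'$ via Theorem~\ref{decomposition}, form the products $e_i\tilde{e}_j$ of commuting Riesz idempotents, group by distinct values of $\lambda_i\lambda_j$, and re-apply Theorem~\ref{decomposition}. The paper's $\hat{e}_\alpha$ indexed by $\alpha\in K^2$ are precisely your $F_k$; you are simply more explicit about the consolidation step needed before invoking the theorem.
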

	\begin{proof}
		If we decompose $ a^\prime\in \orb(a) $ and $ b^\prime\in \orb(b) $ according to Theorem \ref{decomposition} as
		$$a^\prime=\sum_{i=1}^n \lambda_ie_i, \quad b^\prime=\sum_{i=1}^n \lambda_i\tilde{e}_i,$$
		then by direct calculation (and the aid of commutativity and orthogonality of the Riesz idempotents) we find that $$ a^\prime b^\prime  =\sum_{i=1}^n\sum_{j=1}^n\lambda_i\lambda_j e_i \tilde{e}_j,
		=\sum_{\alpha\in K^2} \alpha \hat{e}_\alpha,$$ 
		where $ \hat{e}_\alpha\hat{e}_\beta =0 $ for all $ \alpha\neq \beta $ in $ K^2 $, and $ \sum_{\alpha\in K^2}\hat{e}_\alpha=\identity $. Therefore, by Theorem \ref{decomposition}, $ a^\prime b^\prime $ is algebraic under the polynomial
		$$q(\lambda)\coloneqq \prod_{\alpha\in K^2}(\lambda-\alpha).$$
		
	\end{proof}
	It turns out that when $ a $ is algebraic, we have a few additional equivalent statements to the list in Corollary \ref{MusingPt2}. This then mostly completes generalizing {\cite[Theorem 4.3]{z79}} to algebraic elements.
	\begin{theorem}
		Let $ A $ be a semisimple Banach algebra and suppose that $ a\in E(p) $. Then the following are equivalent:
		\begin{enumerate}
			\item[\textnormal{(i)}] $ a $ is central;
			\item[\textnormal{(ii)}] there exists a polynomial $ q(\lambda) $ such that $ a E(p)\subseteq E(q) $;
			\item[\textnormal{(iii)}] there exists a polynomial $ q(\lambda) $ such that $ a \orb(a)\subseteq E(q)$;
			\item[\textnormal{(iv)}] there exists a polynomial $ q(\lambda) $ such that $ a \circ E(p)\subseteq E(q) $;
			\item[\textnormal{(v)}] there exists a polynomial $ q(\lambda) $ such that $ a \circ \orb(a)\subseteq E(q) $.
			
		\end{enumerate}
	\end{theorem}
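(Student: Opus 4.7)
The plan is to establish the cycle of implications $(\text{i}) \Rightarrow (\text{ii}) \Rightarrow (\text{iii}) \Rightarrow (\text{i})$ and $(\text{i}) \Rightarrow (\text{iv}) \Rightarrow (\text{v}) \Rightarrow (\text{i})$. The implications $(\text{ii}) \Rightarrow (\text{iii})$ and $(\text{iv}) \Rightarrow (\text{v})$ are free of charge once one observes that $\orb(a) \subseteq E(p)$: if $r = waw^{-1}$ for $w \in G_\identity$, then $p(r) = wp(a)w^{-1} = 0$, so restricting the inclusion from $E(p)$ to $\orb(a)$ is automatic with the same polynomial $q$.

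For $(\text{i}) \Rightarrow (\text{ii})$, centrality of $a$ gives $\orb(a) = \{a\}$, and any $b \in E(p)$ commutes with $a$ and satisfies the premise of Proposition~\ref{myProposition}. That proposition then shows $ab$ is algebraic under the polynomial $q(\lambda) = \prod_{\alpha \in K^2}(\lambda - \alpha)$, which is independent of the choice of $b$, so $aE(p) \subseteq E(q)$. For $(\text{i}) \Rightarrow (\text{iv})$, I would use the identity $a \circ b = \identity - (\identity - a)(\identity - b)$: both $\identity - a$ and $\identity - b$ lie in $E(p')$ where $p'(\lambda) = \prod_i(\lambda - (1-\lambda_i))$, they commute, and hence by the same proposition $(\identity - a)(\identity - b)$ is algebraic under $q'(\lambda) = \prod_{\alpha \in (K')^2}(\lambda - \alpha)$ with $K' = \{1-\lambda_i\}$. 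Substituting $\lambda \mapsto 1 - \lambda$ produces the desired polynomial $q$ with $a \circ b \in E(q)$ for every $b \in E(p)$.

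The implications $(\text{iii}) \Rightarrow (\text{i})$ and $(\text{v}) \Rightarrow (\text{i})$ are exactly where Corollary~\ref{artificialList} earns its keep. If $ar \in E(q)$ for every $r \in \orb(a)$, then $\sigma(ar)$ is contained in the finite root set of $q$, so $\rho(ar) \leq \max\{|\mu| : q(\mu) = 0\}$ uniformly in $r$. The ``in particular'' clause of Corollary~\ref{artificialList} (applied with $b = a$) then forces $a$ to be central. The same argument applied to $a \circ r$ handles $(\text{v}) \Rightarrow (\text{i})$.

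I do not expect any step to present serious difficulty: the substantive content has already been packaged into Proposition~\ref{myProposition} (handling the product of commuting algebraic elements) and Corollary~\ref{artificialList} (turning uniform spectral radius bounds on an orbit into centrality). The only minor subtlety is the translation between $\cdot$ and $\circ$ via $\identity - (\cdot)$, but this is purely formal.
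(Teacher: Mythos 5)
Your proposal is correct and follows essentially the same route as the paper: use Proposition~\ref{myProposition} (and the $\identity-(\cdot)$ substitution for the $\circ$ variants) to go from centrality to the algebraic containments, then bound $\rho(ar)$ or $\rho(a\circ r)$ by the modulus of the largest root of $q$ and invoke Corollary~\ref{artificialList} to return to centrality. The only difference is organizational — you arrange the implications into two explicit cycles via $\orb(a)\subseteq E(p)$, whereas the paper shows (i) implies each of (ii)--(v) and then that each of (ii)--(v) implies (i) directly — but the mathematical content is identical.
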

	\begin{proof}
		If $ a $ is central, then applying an identical argument as the one given in the proof for Proposition \ref{myProposition} shows that (ii) and (iii) are true. Since $ a\circ b =\identity-(\identity-a)(\identity-b)$, it follows that (i) is also sufficient for (iv) and (v), for the subsequent reasons: Notice that if $ a $ is algebraic under the polynomial $ p $, then so too is $ \identity-a $, under 
		$$\lambda\mapsto\prod_{i=1}^n(1-\lambda_i-\lambda).$$
		Therefore, when $ b\in E(p)$, the element $ \identity-b $ is algebraic as well and hence, via commutation of $ a $ and $ b $ (as well as the proof of Proposition \ref{myProposition}), it follows that $ \identity-(\identity-a)(\identity-b) $ is algebraic under a polynomial $ q(\lambda) $. A similar case can be made showing (i)$ \Rightarrow $(v). \\
		If (ii) or (iii) is true, then obviously we must have that
		$$ \sup_{r\in \orb(a)}\rho(ar) <\infty,$$
		whence Corollary \ref{artificialList} implies that $ a $ is central. Similarly, if (iv) or (v) holds, then 
		$$ \sup_{r\in \orb(a)}\rho(a\circ r) <\infty,$$
		so that again $ a $ is central.
	\end{proof}
	For the purposes of the next results, define the following:
	$$ t_1\coloneqq \sup_{r\in \orb(b)}\rho(ar), \quad t_2\coloneqq \sup_{r\in \orb(b)}\rho(a+r),\quad \kappa\coloneqq \inf\left\{\abs{\lambda}\colon \lambda\in K^2\right\}.$$
	\begin{theorem}\label{TheBigOne2}
		Let $ A $ be a semisimple Banach algebra and suppose that $ a,b\in E(p) $. If we have that 
		$$ \#\left\{\lambda\in K^2\colon \abs{\lambda}=\kappa\right\}=1,$$
		then there exist $ \mu\geq0 $ and $ \delta>0 $ such that if $ t_1<\delta $, then
		$$\orb(a)\orb(b)=\orb(b)\orb(a)=\{\mu\identity\}.$$
		In particular, if $ a\not\in G(A) $ then 
		$$\orb(a)\orb(b)=\orb(b)\orb(a)=\{0\}.$$
	\end{theorem}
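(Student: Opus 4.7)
The plan is to combine Corollary~\ref{artificialList} (to force commutation of the orbits) with Proposition~\ref{myProposition} (to force each product into algebraic form with spectrum contained in $K^2$), and then use the singleton hypothesis on $K^2$ to pin the spectrum down to the unique minimizer.

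Concretely, let $\mu$ denote the unique element of $K^2$ with $|\mu| = \kappa$, and choose $\delta$ to satisfy $\kappa < \delta \leq \min\{|\alpha| : \alpha \in K^2 \setminus \{\mu\}\}$ (with the right-hand side interpreted as $+\infty$ when $K^2 = \{\mu\}$). The uniqueness hypothesis is precisely what keeps this interval nonempty. If $t_1 < \delta$, then Corollary~\ref{artificialList} yields that every element of $\orb(a)$ commutes with every element of $\orb(b)$, and Proposition~\ref{myProposition} (using the decomposition in its proof) furnishes, for each $(a',b') \in \orb(a) \times \orb(b)$,
\[
a'b' \;=\; \sum_{\alpha \in K^2} \alpha\,\hat{e}_\alpha,
\]
with $\{\hat{e}_\alpha\}_{\alpha \in K^2}$ a family of pairwise orthogonal idempotents summing to $\identity$; in particular $\sigma(a'b') \subseteq K^2$ and $\orb(a)\orb(b) = \orb(b)\orb(a)$.

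The crucial step is to upgrade this to $a'b' = \mu\identity$. From $\rho(a'b') \leq t_1 < \delta$ one has $\sigma(a'b') \subseteq \{\lambda \in K^2 : |\lambda| < \delta\} = \{\mu\}$, and since the spectrum is nonempty, equality holds. On the other hand, the distinctness of the $\alpha \in K^2$ together with the orthogonality of the $\hat{e}_\alpha$ yields the identity $\sigma(a'b') = \{\alpha \in K^2 : \hat{e}_\alpha \neq 0\}$; one checks, for $\beta \notin K^2$, that $\sum_\alpha (\alpha - \beta)^{-1}\hat{e}_\alpha$ inverts $a'b' - \beta\identity$, while $(a'b' - \alpha\identity)\hat{e}_\alpha = 0$ shows that every ``active'' $\alpha$ must lie in the spectrum. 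Matching the two descriptions forces $\hat{e}_\alpha = 0$ for every $\alpha \neq \mu$, so $\hat{e}_\mu = \identity$, and hence $a'b' = \mu\identity$.

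Since $(a',b')$ was arbitrary, $\orb(a)\orb(b) = \orb(b)\orb(a) = \{\mu\identity\}$. The ``in particular'' clause is then immediate: if $a \notin G(A)$ then $0 \in \sigma(a) \subseteq K$, so $0 \in K^2$ and $\kappa = 0$, whence the uniqueness assumption pins $\mu = 0$ and $\orb(a)\orb(b) = \{0\}$. I expect the only genuinely delicate point to be the spectral identity $\sigma\bigl(\sum_\alpha \alpha\,\hat{e}_\alpha\bigr) = \{\alpha : \hat{e}_\alpha \neq 0\}$, which is conceptually clear but rests on the concrete structure of the Riesz decomposition rather than on an abstract spectral argument.
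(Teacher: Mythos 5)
Your proof is correct and follows essentially the same route as the paper: choose $\delta$ as the second-smallest modulus in $K^2$, invoke Corollary~\ref{artificialList} for commutativity, apply the Riesz decomposition from Proposition~\ref{myProposition} and Theorem~\ref{decomposition}, and use $t_1 < \delta$ to pin $\sigma(a'b')$ down to $\{\mu\}$ and hence $a'b' = \mu\identity$. The only cosmetic difference is that you argue directly and spell out the spectral identity $\sigma\bigl(\sum_\alpha \alpha\,\hat{e}_\alpha\bigr) = \{\alpha \in K^2 : \hat{e}_\alpha \neq 0\}$ explicitly, whereas the paper phrases the final step as a contradiction and leaves this identity implicit.
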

	\begin{proof}
		Choose $ \delta $ to be the real number
		$$\inf\{\abs{\lambda}\colon \lambda\in K^2, \abs{\lambda}>\kappa\}.$$
		Hence if $ t_1<\delta $, Corollary \ref{artificialList} implies that $ a^\prime b^\prime=b^\prime a^\prime $ for $ a^\prime\in \orb(a) $ and $ b^\prime\in \orb(b) $. Furthermore, $ a^\prime b^\prime $ is algebraic and $ \sigma(a^\prime b^\prime)\subseteq K^2 $. Let $ \mu $ be the complex number in $ K^2 $ satisfying $ \abs{\mu}=\kappa $. By Theorem \ref{decomposition}, if $ a^\prime b^\prime\neq \mu\identity $, it must be the case that $ \sigma(a^\prime b^\prime)\cap K^2 \neq \{\mu\} $—implying that for some $ r\in \orb(b) $ (depending on $ a^\prime $ and $ b^\prime $) we will have that $  \rho(ar)\geq \delta $, which is a contradiction. \\
		If $ a\notin G(A) $ then $ 0\in \sigma(a) $, so that $ 0\in K^2 $ and hence $ \kappa=\mu=0 $. Obviously then $ \#\left\{\lambda\in K^2\colon \abs{\lambda}=\kappa\right\}=1 $ so by the first part the orthogonality of $ \orb(a) $ and $ \orb(b) $ is apparent.
	\end{proof}
	In the special case for idempotents, we have that $ K^2=K $, $ \delta=1 $ and $ \mu=\kappa=0 $; that is, $ \orb(a) $ and $ \orb(b) $ are orthogonal when $ t_1<1 $. 
	\begin{corollary}\label{TheBigOne3}
		Let $ A $ be a semisimple Banach algebra and suppose that $ p $ and $ q $ are idempotents in $ A $. Then the following statements are equivalent:
		\begin{enumerate}
			\item[\textnormal{(i)}] $ \displaystyle \orb(p)\orb(q)=\orb(q)\orb(p)=\{0\}  $,
			\item[\textnormal{(ii)}] $\displaystyle t_1 = \sup_{r\in \orb(q)}\rho(pr)<1 $,
			\item[\textnormal{(iii)}] $ \displaystyle t_2=\sup_{r\in \orb(q)}\rho(p+r)<2 $.
		\end{enumerate}
	\end{corollary}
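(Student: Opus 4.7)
The plan is to establish the implications $(i) \Rightarrow (ii) \Rightarrow (i)$ and $(i) \Rightarrow (iii) \Rightarrow (i)$. The first and third of these are immediate, the implication $(ii) \Rightarrow (i)$ follows directly from Theorem \ref{TheBigOne2}, and essentially all of the work lies in $(iii) \Rightarrow (i)$.

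For the easy directions: $(i) \Rightarrow (ii)$ is immediate, since $\orb(p)\orb(q) = \{0\}$ forces $\rho(pr) = 0$ for every $r \in \orb(q)$ and hence $t_1 = 0 < 1$. For $(i) \Rightarrow (iii)$, condition (i) forces both $pr = 0$ and $rp = 0$ for every $r \in \orb(q)$, whence $(p+r)^2 = p + 2pr + r = p + r$; so $p + r$ is itself an idempotent with $\rho(p+r) \leq 1 < 2$, uniformly in $r$. For $(ii) \Rightarrow (i)$, I would apply Theorem \ref{TheBigOne2} with $K = \{0,1\}$: then $K^2 = \{0,1\}$, $\kappa = \mu = 0$, $\delta = 1$, and the uniqueness condition $\#\{\lambda \in K^2 : |\lambda| = \kappa\} = 1$ is trivially met, so $t_1 < 1$ yields $\orb(p)\orb(q) = \orb(q)\orb(p) = \{0\}$, precisely the reading already recorded in the paragraph preceding this corollary.

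For the main direction $(iii) \Rightarrow (i)$: since $t_2 < 2 < \infty$, Corollary \ref{artificialList}(ii) already gives elementwise commutation of $\orb(p)$ and $\orb(q)$. Fix $r \in \orb(q)$; commutativity of $p$ and $r$ produces four mutually orthogonal idempotents $f_1 = pr$, $f_2 = p(\identity - r)$, $f_3 = (\identity - p)r$, $f_4 = (\identity - p)(\identity - r)$ summing to $\identity$, with $p = f_1 + f_2$, $r = f_1 + f_3$, and hence $p + r = 2 f_1 + f_2 + f_3$. Working inside the commutative subalgebra they generate gives $p + r - 2\identity = -f_2 - f_3 - 2 f_4$; this element is inverted by $-f_2 - f_3 - \tfrac{1}{2} f_4$ precisely when $f_1 = 0$, while $(p + r - 2\identity)f_1 = 0$ whenever $f_1 \neq 0$. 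Thus $2 \in \sigma(p+r)$ if and only if $pr \neq 0$, and the hypothesis $\rho(p+r) < 2$ forces $pr = 0$ for every $r \in \orb(q)$ (and $rp = 0$ by commutation). To lift this to $p'r' = 0$ for arbitrary $p' = wpw^{-1} \in \orb(p)$ and $r' \in \orb(q)$, note that $w^{-1}r'w \in \orb(q)$, so $p(w^{-1}r'w) = 0$, which rearranges to $p'r' = 0$; symmetrically $r'p' = 0$, yielding (i).

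The main obstacle is the spectral identification $2 \in \sigma(p+r) \iff pr \neq 0$ inside $(iii) \Rightarrow (i)$. Once the Riesz-style decomposition of two commuting idempotents is in hand, this reduces to a short invertibility check in a small commutative subalgebra and everything else is bookkeeping. A stylistic alternative would be to invoke Theorem \ref{decomposition} directly, observing that $p + r$ is algebraic under $\lambda(\lambda - 1)(\lambda - 2)$ and reading off $\sigma(p+r)$ from which of the grouped Riesz idempotents are nonzero; either route reaches the same conclusion.
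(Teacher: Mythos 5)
Your proof is correct and follows the same overall architecture as the paper's: both establish $\textnormal{(i)}\iff\textnormal{(ii)}$ as a direct read-off from Theorem~\ref{TheBigOne2}, and both handle the remaining direction by first using Corollary~\ref{artificialList} to get elementwise commutation of $\orb(p)$ and $\orb(q)$ and then pinning down the relationship between $2\in\sigma(p+r)$ and the vanishing of $pr$. The one genuine point of divergence is how that spectral relationship is obtained. The paper invokes the external result \cite[Theorem 2.2]{BBB21}, namely that for commuting idempotents $\lambda\notin\{0,1\}$ lies in $\sigma(p+r)$ iff $(\lambda-1)^2\in\sigma(pr)$, and applies it at $\lambda=2$. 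You instead derive the needed instance from scratch: decompose $\identity$ into the four mutually orthogonal idempotents $f_1=pr$, $f_2=p(\identity-r)$, $f_3=(\identity-p)r$, $f_4=(\identity-p)(\identity-r)$, write $p+r-2\identity=-f_2-f_3-2f_4$, exhibit the explicit inverse $-f_2-f_3-\tfrac12 f_4$ when $f_1=0$, and note that $(p+r-2\identity)f_1=0$ otherwise. This makes your argument fully self-contained at the small cost of a few lines of bookkeeping, and it sidesteps a minor gap in the paper's phrasing (the paper fixes $r$ with $\sigma(pr)=\{0,1\}$, implicitly dismissing $\sigma(pr)=\{1\}$, which your computation handles automatically). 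Your lift from $pr'=0$ for all $r'\in\orb(q)$ to $p'r'=0$ for all $p'\in\orb(p)$ via conjugation is also sound, though the paper avoids it by passing through $t_1<1$ and Theorem~\ref{TheBigOne2} instead; the two routes are interchangeable.
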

	\begin{proof}
		Observe that from Theorem \ref{TheBigOne2}, we immediately have that 
		$$\orb(p)\orb(q)=\orb(q)\orb(p)=\{0\} \quad \iff \quad t_1<1.$$
		Hence it suffices to show $ t_2<2 $ is equivalent to $ t_1<1 $. If $ t_2 <2 $ then Corollary \ref{artificialList} implies that every element of $\orb(p) $ commutes with every element of $ \orb(q) $. Suppose to the contrary that $ t_1\not <1 $. By commutation of $ p $ and $ r $, the element $ pr $ is an idempotent for all $ r\in \orb(q) $. But not all such elements are $ 0 $, so fix $ r\in\orb(q) $ such that $ \sigma(pr)=\{0,1\} $. Using the following result from \cite[Theorem 2.2]{BBB21},
		\begin{equation}\label{eq3}
			\lambda\not\in\{0,1\}\Rightarrow \left[\lambda\in\sigma(p+r)\iff(\lambda-1)^2\in\sigma(pr)\right], 
		\end{equation}
		
		and due to the fact that $ 2\not\in\{0,1\} $ and $ (2-1)^2\in \sigma(pr) $, it follows that $ 2\in \sigma(p+r) $—which is obviously a contradiction to the hypothesis. Thus, $ t_1<1 $. Conversely, suppose that $ t_1<1 $. Then by the first part it follows that $ \orb(p) $ and $ \orb(q) $ are orthogonal—in which case, $ p+r $ is an idempotent for all $ r\in\orb(q) $. Therefore, $ t_2=1<2 $.
	\end{proof}
	\begin{remark}
		In the proof of Theorem \ref{TheBigOne2}, the chosen $ \delta $ is a sharp upper bound for $ t_1 $. In Corollary~\ref{TheBigOne3}, $ 1 $ and $ 2 $ are sharp upper bounds for $ t_1 $ and $ t_2 $, respectively. Of course, by Corollary~\ref{artificialList}, if $t_1$ or $t_2$ is finite then the best we can do is infer that the elements of $\orb(p)$ commute with the elements of $\orb(q)$. 
	\end{remark}

	For idempotents $ p $ and $ q $ in $ A $, we have a very strong localization principle for similarity orbits. We need not even know what $ \sigma(pr) $ contains, just that it is a singleton for all $ r $ in a small open subset of $ \orb(q) $—in which case—$ p $ and $ q $ are the identity or their similarity orbits are orthogonal. 
	\begin{corollary}\label{TheSmallOne}
		Let $ A $ be a semisimple Banach algebra and suppose $ p $ and $ q $ are idempotents of $ A $ such that $ \#\sigma(pr)=1 $ for all $ r$ in an open subset $ N $ of $ \orb(q) $. Then either $ p=q=\identity $ or $ \orb(p)\orb(q)=\orb(q)\orb(p)=\{0\} $.
	\end{corollary}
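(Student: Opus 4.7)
The plan is to massage the singleton-spectrum hypothesis into the countable-set form required by Theorem~\ref{mainLocalization}, and then close via Corollary~\ref{TheBigOne3}. First I would dispose of the degenerate cases: if $p = 0$ or $q = 0$ then one of the orbits is $\{0\}$ and the conclusion $\orb(p)\orb(q) = \{0\}$ is immediate; if $p = q = \identity$ there is nothing to prove. The remaining boundary situations (exactly one of $p, q$ equals $\identity$ with the other a nontrivial idempotent) turn out to be vacuous under the hypothesis, since any nontrivial idempotent $e$ satisfies $\sigma(e) = \{0, 1\}$, which forces $\sigma(pr) = \{0, 1\}$ in those cases—contradicting $\#\sigma(pr) = 1$.

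Assume then that both $p$ and $q$ lie in $A \setminus \{0, \identity\}$. The decisive observation is that a nontrivial idempotent $p$ is non-invertible, since $p(\identity - p) = 0$ with $\identity - p \neq 0$ rules out a right inverse; hence $pr$ is non-invertible for every $r$ (any inverse of $pr$ would produce a right inverse of $p$), giving $0 \in \sigma(pr)$. Combined with $\#\sigma(pr) = 1$, this forces $\sigma(pr) = \{0\}$ throughout $N$. I then invoke Theorem~\ref{mainLocalization} with $a = p$, $b = q$, and $C = \{0\}$ (the finite-spectrum requirement being satisfied by any pair $(p, b')$ with $b' \in N$), obtaining commutativity of the two orbits and constancy of $\sigma(rp)$ on $\orb(q)$—equal to $\{0\}$, as read off from any $r \in N$.

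It follows that $\rho(pr) = \rho(rp) = 0$ for every $r \in \orb(q)$ (the first equality by Jacobson's Lemma), so $t_1 = 0 < 1$, and Corollary~\ref{TheBigOne3} delivers $\orb(p)\orb(q) = \orb(q)\orb(p) = \{0\}$. The only conceptually non-routine step is the reduction itself—recognising that the singleton in the hypothesis must in fact be $\{0\}$ via non-invertibility of nontrivial idempotents—after which the earlier machinery of Section~\ref{sec3} does all the work.
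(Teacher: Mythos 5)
Your proof is correct, and it takes a genuinely different route from the paper's. You dispose of the boundary cases $p\in\{0,\identity\}$ or $q\in\{0,\identity\}$ at the outset, observe that a nontrivial idempotent $p$ is non-invertible so that $0\in\sigma(pr)$ automatically, and hence the singleton hypothesis already forces $\sigma(pr)=\{0\}$ on $N$; you then feed this into Theorem~\ref{mainLocalization} with $C=\{0\}$ as a black box to get $\sigma(rp)=\{0\}$ on all of $\orb(q)$, and finish with $t_1=0$ in Corollary~\ref{TheBigOne3}. The paper instead never splits into cases: it first globalizes the raw hypothesis $\#\sigma(pr)=1$ to all of $\orb(q)$ by a direct Scarcity argument along an exponential path (exactly as in the proof of Lemma~\ref{countable}), and then extracts the dichotomy from the algebra itself—if $0\notin\sigma(pr)$ for some $r$ then $pr$ is invertible, and $0=(\identity-p)pr$ forces $p=\identity$, whence $r\in G(A)$ gives $q=\identity$; otherwise $\sigma(pr)=\{0\}$ everywhere and one lands at $t_1=0$. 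Your version is a tidier slot-in to the Section~\ref{sec3} machinery, at the cost of an explicit case analysis and of invoking the full strength of Theorem~\ref{mainLocalization} (countable spectra, Lemma~\ref{TwoProofs}, the pigeonhole step) where the paper gets by with Scarcity alone; the paper's route is a little more economical and the identity $0=(\identity-p)pr$ does the case-splitting work invisibly. Both ultimately reduce to $t_1=0$ and Corollary~\ref{TheBigOne3}. One small presentational point: when verifying the finite-spectrum hypothesis of Theorem~\ref{mainLocalization} for the pair $(p,b')$ with $b'\in N$, it is worth noting explicitly that $\sigma(b'p)=\{0\}$ follows from $\sigma(pb')=\{0\}$ via Jacobson's Lemma together with $0\in\sigma(b'p)$ (since $p$ is not invertible), since the theorem is phrased in terms of $\sigma(b'a')$ rather than $\sigma(a'b')$.
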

	
	\begin{proof}
		We first claim that the hypothesis can be globalized in the following sense: $ \#\sigma(pr)=1 $ for all $ r$ in $ \orb(q) $. To see why this is so, fix a member $ e^{x_1}\cdots e^{x_n}q e^{-x_n}\cdots e^{-x_1} $ in $ N $ and let $$ r =e^{y_1}\cdots e^{y_m}q e^{-y_m}\cdots e^{-y_1}  $$ be arbitrary in $ \orb(q) $. Without loss of generality, we may assume that $ m=n $. Define the entire function $ f\colon \field\to A $ by:
		$$f(\lambda)\coloneqq e^{\lambda x_1+(1-\lambda)y_1}\cdots e^{\lambda x_n+(1-\lambda)y_n}.$$
		Hence by continuity of the map $ g\colon \lambda\mapsto f(\lambda)q\left[f(\lambda)\right]^{-1} $, we also have that there is some disk $ B(1,\delta) $ in $ \field $ such that $ \lambda\in B(1,\delta) $ implies $ g(\lambda)\in N $— so that $ \#\sigma(pg(\lambda)) =1$ for all $ \lambda\in  B(1,\delta)$. By the Scarcity Theorem, this must hold for all $ \lambda \in \field $, and in particular, $1= \#\sigma(pg(0)) =\#\sigma(pr)$. Hence, $ \#\sigma(pr)=1 $ for all $ r\in \orb(q) $.
		\\
		If there is some $ r\in \orb(q) $ such that $ 0\not\in\sigma(pr) $, then $ pr $ is invertible. Moreover, $ 0=(\identity-p)pr $ implies that $ p=\identity $. But then, $ r\in G(A) $ and hence $ r=q=p=\identity $. Hence if $ p\neq q $, then $$ \sigma(pwqw^{-1})=\{0\} \text{ for all } w\in G_\identity(A).$$ 
		Now observe that the value of $ t_1 $ is $ 0 $ in Corollary \ref{TheBigOne3}, so the components $ \orb(p) $ and $ \orb(q) $ are orthogonal, as advertised.
	\end{proof}
	\begin{remarks}
		A couple of remarks related to the above corollary are in order.
		\begin{enumerate}
			\item[(i)] The second part of Corollary \ref{TheSmallOne} can be directly proven without the use of Corollary \ref{TheBigOne3} with some basic representation theory. A brief outline of the proof is as follows: Begin by assuming $ pq\neq 0 $, to obtain a representation $ \pi $ on a Banach space $ X $ such that $ \pi(pq)\eta\neq 
			0 $, for some $ \eta\in X $. Then set $ \xi=\pi(q)\eta $ and argue as to why $ \{\pi(p)\xi,\pi(q)\xi\} $ is a linearly independent set. Thereafter a simple application of Sinclair's Density Theorem to this set will yield the necessary contradiction.
			\item[(ii)] If in the hypothesis of the corollary we assumed that $ \#\sigma(pr)\leq n>1$ for all $ r\in \orb(q) $, then the result would be invalid. A simple example illustrating the case for $ n=2 $ can be obtained from $ M_2(\field) $. Set
			\[p=\begin{bmatrix}
				1&0\\
				0& 0
			\end{bmatrix}, \quad q=\begin{bmatrix}
				1&0\\
				0& 1
			\end{bmatrix}.
			\]
			Then $ \#\sigma(pr)=2 $ for all $ r\in \orb(q)$ but $ p\neq q $ and $ \orb(p)\orb(q)=\orb(p)\neq\{0\} $. One may construct similar examples in $ M_n(\field) $ for larger values of $ n $.
		\end{enumerate}
	\end{remarks}
The final result gives a localization principle for algebraic elements, and, expectedly, its conclusion is stronger than that of Theorem \ref{mainLocalization}, in that we have yet another characterization of central algebraic elements. Recall that in Theorem \ref{mainLocalization}, we required in the premise that $ \sigma(b^\prime a^\prime) $ is finite for some pair $ (a^\prime,b^\prime) \in\orb(a)\times \orb(b)$. Here we are looking at just $ \orb(a) $, and obviously then $\sigma( a^2) $ is finite since $ \sigma(a) $ is finite.
	\begin{corollary}
		Let $ A $ be a semisimple Banach algebra and suppose that $ a\in A $ is algebraic. Then $ a $ is central if and only if there exist a nonempty countable set $ C \subseteq \field$ and a nonempty open subset $ N $ of $ \orb(a) $ such that $ \sigma(ra)\subseteq C $ for all $r\in N $.
	\end{corollary}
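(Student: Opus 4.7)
The forward implication is immediate: if $a$ is central, then $\orb(a) = \{a\}$ is itself a nonempty open subset of $\orb(a)$, and algebraicity of $a$ makes $\sigma(a^2)$ finite, so $N = \orb(a)$ and $C = \sigma(a^2)$ serve as the required witnesses.

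For the converse, the plan is to apply Theorem \ref{mainLocalization} with $b = a$ and the specific pair $(a^\prime, b^\prime) = (a, a)$. The hypothesis $\sigma(ra) \subseteq C$ on $N$ is given by assumption, and the side condition $\#\sigma(b^\prime a^\prime) = \#\sigma(a^2) < \infty$ comes for free from the algebraicity of $a$. Theorem \ref{mainLocalization} then delivers two facts simultaneously: every element of $\orb(a)$ commutes with every element of $\orb(a)$, and moreover $\sigma(ra) = \sigma(a^2)$ for every $r \in \orb(a)$.

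The final step is to upgrade self-commutation of $\orb(a)$ into centrality of $a$ by invoking Corollary \ref{MusingPt2}. From the constancy of $\sigma(ra)$ along $\orb(a)$ just obtained,
\[
\bigcup_{r \in \orb(a)} \sigma^\prime(ar) \subseteq \sigma(a^2)\setminus\{0\},
\]
which is finite—and in particular a proper subset of $\field \setminus \{0\}$—because $a$ is algebraic. Hence condition (\ref{(v)}) of Corollary \ref{MusingPt2} is satisfied, and $a$ must be central.

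There is no serious obstacle here: the corollary is essentially a clean repackaging of Theorem \ref{mainLocalization} specialised to $b = a$, in which the only nontrivial side hypothesis of that theorem—the existence of a pair with $\#\sigma(b^\prime a^\prime) < \infty$—is supplied automatically by the algebraicity of $a$, and the conclusion of self-commutation is then promoted to centrality by the spectral finiteness afforded by the "moreover" clause together with Corollary \ref{MusingPt2}.
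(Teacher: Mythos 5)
Your proof is correct and takes essentially the same route as the paper: apply Theorem \ref{mainLocalization} with $b=a$ and the pair $(a,a)$ (using algebraicity to supply $\#\sigma(a^2)<\infty$) to obtain self-commutation of $\orb(a)$, then reduce to Corollary \ref{MusingPt2}. The only cosmetic difference is that the paper re-runs the representation-theoretic argument from the proof of Corollary \ref{MusingPt2}, whereas you use the \emph{moreover} clause of Theorem \ref{mainLocalization} to verify condition (\ref{(v)}) and then cite that corollary directly.
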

	\begin{proof}
		If $ a $ is central then $ \orb(a)=\{a\} $ and hence $ C $ may be taken as the finite set $ \sigma\left(a^2\right) $ to obtain the forward implication. \\ For the converse, we take the pair $ (a,a)\in\orb(a)\times\orb(a) $ in Theorem \ref{mainLocalization} to get that $ a $ commutes with all elements of its orbit, so that an identical argument as the one given in Corollary \ref{MusingPt2} implies the centrality of $ a $.
	\end{proof}


\begin{thebibliography}{1}
	
	\bibitem{a91}
	B.~Aupetit, \emph{A primer on spectral theory}, Universitext (1979),
	Springer-Verlag, 1991, \href{http://dx.doi.org/10.1007/978-1-4612-3048-9}{\path{doi}}.
	
	\bibitem{BBB21}
	M.~Barraa, E.~H. Benabdi, and M.~Boumazgour, \emph{On the spectra of products
		and linear combinations of idempotents}, Adv. Oper. Theory \textbf{6} (2021),
	1--9, \href{http://dx.doi.org/10.1007/s43036-020-00107-0}{\path{doi}}.
	
	\bibitem{radicalpaper}
	R.~Brits, \emph{On the multiplicative spectral characterization of the
		{J}acobson radical}, Quaest. Math. \textbf{31} (2008), 179--188, \href{https://doi.org/10.2989/QM.2008.31.2.7.479}{\path{doi}}.

	\bibitem{makaiZemanek16}
	E.~Makai and J.~Zemánek, \emph{Nice connecting paths in connected components
		of sets of algebraic elements in a {B}anach algebra}, Czechoslov. Math. J.
	\textbf{66} (2016), 821--828, \href{http://dx.doi.org/10.1007/s10587-016-0294-6}{\path{doi}}.

	\bibitem{z79}
	J.~Zem{\'{a}}nek, \emph{Idempotents in {B}anach algebras}, Bull. London Math.
	Soc. \textbf{11} (1979), 177--183, \href{http://dx.doi.org/10.1112/blms/11.2.177}{\path{doi}}.
	
	\bibitem{Zemanek}
	J.~Zem{\'{a}}nek, \emph{Spectral radius characterizations of commutativity in
		{B}anach algebras}, Studia Math. \textbf{61} (1977), 257--268, \href{http://dx.doi.org/10.4064/sm-61-3-257-268}{\path{doi}}.
\end{thebibliography}
\end{document}